\begin{document}
 \bibliographystyle{plain}

 \newtheorem*{KT}{Khintchine's Theorem}
 \newtheorem*{DSC}{Duffin-Schaeffer Conjecture}
 \newtheorem*{MTP}{Mass Transference Principle}
 \newtheorem{theorem}{Theorem}
 \newtheorem{lemma}{Lemma}
 \newtheorem{corollary}{Corollary}
 \newtheorem{problem}{Problem}
 \newtheorem{conjecture}{Conjecture}
 \newtheorem{definition}{Definition}
 \newcommand{\mc}{\mathcal}
 \newcommand{\rar}{\rightarrow}
 \newcommand{\Rar}{\Rightarrow}
 \newcommand{\lar}{\leftarrow}
 \newcommand{\lrar}{\leftrightarrow}
 \newcommand{\Lrar}{\Leftrightarrow}
 \newcommand{\zpz}{\mathbb{Z}/p\mathbb{Z}}
 \newcommand{\mbb}{\mathbb}
 \newcommand{\A}{\mc{A}}
 \newcommand{\B}{\mc{B}}
 \newcommand{\cc}{\mc{C}}
 \newcommand{\D}{\mc{D}}
 \newcommand{\E}{\mc{E}}
 \newcommand{\F}{\mc{F}}
 \newcommand{\G}{\mc{G}}
 \newcommand{\HH}{\mc{H}}
  \newcommand{\ZG}{\Z (G)}
 \newcommand{\FN}{\F_n}
 \newcommand{\I}{\mc{I}}
 \newcommand{\J}{\mc{J}}
 \newcommand{\M}{\mc{M}}
 \newcommand{\nn}{\mc{N}}
 \newcommand{\qq}{\mc{Q}}
 \newcommand{\U}{\mc{U}}
 \newcommand{\X}{\mc{X}}
 \newcommand{\Y}{\mc{Y}}
 \newcommand{\itQ}{\mc{Q}}
 \newcommand{\C}{\mathbb{C}}
 \newcommand{\R}{\mathbb{R}}
 \newcommand{\N}{\mathbb{N}}
 \newcommand{\Q}{\mathbb{Q}}
 \newcommand{\Z}{\mathbb{Z}}
 \newcommand{\ff}{\mathfrak F}
 \newcommand{\fb}{f_{\beta}}
 \newcommand{\fg}{f_{\gamma}}
 \newcommand{\gb}{g_{\beta}}
 \newcommand{\vphi}{\varphi}
 \newcommand{\whXq}{\widehat{X}_q(0)}
 \newcommand{\Xnn}{g_{n,N}}
 \newcommand{\lf}{\left\lfloor}
 \newcommand{\rf}{\right\rfloor}
 \newcommand{\lQx}{L_Q(x)}
 \newcommand{\lQQ}{\frac{\lQx}{Q}}
 \newcommand{\rQx}{R_Q(x)}
 \newcommand{\rQQ}{\frac{\rQx}{Q}}
 \newcommand{\elQ}{\ell_Q(\alpha )}
 \newcommand{\oa}{\overline{a}}
 \newcommand{\oI}{\overline{I}}
 \newcommand{\dx}{\text{\rm d}x}
 \newcommand{\dy}{\text{\rm d}y}
\newcommand{\cal}[1]{\mathcal{#1}}
\newcommand{\cH}{{\cal H}}
\newcommand{\diam}{\operatorname{diam}}

\parskip=0.5ex

\title[The metric theory of $p-$adic approximation]{The metric theory\\of $p-$adic approximation}
\author{Alan~K.~Haynes}
\subjclass[2000]{11K60, 11K41}
\thanks{Research supported by EPSRC grant EP/F027028/1}
\keywords{Duffin-Schaeffer Conjecture, Khintchine's Theorem, metric number theory, $p-$adic approximation, Hausdorff measure, metrically transitive maps, group algebra}
\address{Department of Mathematics, University of York, Heslington, York YO10 5DD, UK}
\email{akh502@york.ac.uk}

\allowdisplaybreaks


\begin{abstract}
Metric Diophantine approximation in its classical form is the study of how well almost all real numbers can be approximated by rationals. There is a long history of results which give partial answers to this problem, but there are still questions which remain unknown. The Duffin-Schaeffer Conjecture is an attempt to answer all of these
questions in full, and it has withstood more than fifty years of mathematical investigation. In this paper we establish a strong connection between the Duffin-Schaeffer Conjecture and its $p-$adic analogue. Our main theorems are transfer principles which allow us to go back and forth between these two problems. We prove that if the variance method from probability theory can be used to solve the $p-$adic Duffin-Schaeffer Conjecture for even one prime $p$, then almost the entire classical Duffin-Schaeffer Conjecture would follow. Conversely if the variance method can be used to prove the classical conjecture then the $p-$adic conjecture is true for all primes. Furthermore we are able to unconditionally and completely establish the higher dimensional analogue of this conjecture in which we allow simultaneous approximation in any finite number and combination of real and $p-$adic fields, as long as the total number of fields involved is greater than one. Finally by using a mass transference principle for Hausdorff measures we are able to extend all of our results to their corresponding analogues with Haar measures replaced by the Hausdorff measures associated with arbitrary dimension functions.
\end{abstract}


\maketitle

\section{Introduction}
Throughout this paper we will use the following notation from elementary number theory: $\mu (n)$ is the M\"{o}bius function, $\varphi (n)$ is the Euler phi
function, $\omega (n)$ denotes the number of distinct prime divisors of $n$, $p$ and $q$ denote prime numbers,  and unless otherwise obvious $(a,n)$ denotes the greatest common divisor of $a$ and $n$. Also we use $\lambda$ to denote Lebesgue measure
on $\R/\Z$, and $\psi$ to denote a non-negative function from $\N$ to $\R$.

The Duffin-Schaeffer Conjecture is a central open problem in metric number theory which seeks to determine how well almost all real numbers can be approximated by rationals. If $\psi:\N\rar\R$ is a non-negative arithmetical function then for each $n\in\N$ we can define a subset $A_n=A_n(\psi)\subseteq\R/\Z$ by
\begin{equation*}
A_n=\bigcup_{\substack{a=1\\(a,n)=1}}^n\left[\frac{a}{n}-\psi (n),\frac{a}{n}+\psi (n)\right].
\end{equation*}
If a point $x\in\R/\Z$ falls in one of the sets $A_n$ then it means that the reduced fractions with denominators $n$ are `good' approximations to $x$, where the adjective `good' is quantified by our choice of the function $\psi$. From the point of view of Diophantine approximation it is interesting to study the collection of points in $\R/\Z$ which have infinitely many such approximations for a given $\psi$. For this reason we define a subset $W_\infty(\psi)\subseteq\R/\Z$ by
\begin{align*}
W_\infty(\psi)=\{x\in\R/\Z :x\in A_n(\psi) \text{ for infinitely many }n\in\N\}.
\end{align*}
A major stepping stone in this type of problem was provided by the following theorem, published by Khintchine in 1924 (\cite{Khintchine1924}, \cite{Khintchine1926}).
\begin{KT}
If $n^2\psi(n)$ is non-increasing then $\lambda (W_\infty(\psi))=1$ if
\begin{equation*}
\sum_{n\in\N}n\psi (n)=\infty,
\end{equation*}
and $\lambda (W_\infty(\psi))=0$ otherwise.
\end{KT}
For example Khintchine's Theorem shows that for almost every real number $x$ there are infinitely many fractions $a/n$, written in lowest terms, for which
\[\left|x-\frac{a}{n}\right|\le\frac{1}{n^2\log n\log\log n}.\]
Removing monotonicity from Khintchine's Theorem is in general very difficult. This appears to have been first addressed by R.~J.~Duffin and A.~C.~Schaeffer in a 1941 paper (\cite{DuffinSchaeffer}), and the concluding comments of that paper have inspired the following conjecture.
\begin{DSC}
For any non-negative function $\psi:\N\rar\R$ we have that $\lambda (W_\infty (\psi))=1$ if
\begin{equation}\label{DSdivcond1}
\sum_{n\in\N}\lambda (A_n)=\infty
\end{equation}
and $\lambda (W_\infty(\psi))=0$ otherwise.
\end{DSC}
We point out that when the sum on left of (\ref{DSdivcond1}) converges it follows immediately from the Borel-Cantelli Lemma that $\lambda (W_\infty(\psi))=0$. Furthermore it is a theorem of P.~Gallagher (\cite{Gallagher}) that the measure of the set $W_\infty(\psi)$ is always either one or zero. Although progress toward proving the Duffin-Schaeffer Conjecture has been made by many mathematicians, it is still the central open problem in metric number theory.

Many authors have explored various generalizations of the Duffin-Schaeffer Conjecture, but it is not within our scope to list them all here. In this paper we are going to present the natural analogues of this conjecture in the local fields $\Q_p$. Surprisingly this problem has not been directly addressed, even though its counterpart for non-Archimedean local fields of positive characteristic (i.e. fields of formal Laurent series over finite fields) has already been studied (\cite{InoueNakada}, \cite{Inoue}). However, from a broader point of view, the metric theory of Diophantine approximation in $\Q_p$ itself is by no means an unexplored topic. It appears to have been first considered by K.~Mahler in 1940 (\cite{Mahler}). Shortly thereafter a $p-$adic analogue of Khintchine's Theorem was proved by V.~Jarnik (\cite{Jarnik1945}) and then generalized to higher dimensional linear forms in a 1955 thesis by E.~Lutz (\cite{Lutz}). Stemming from Lutz's work there have been several other advances in understanding this subject (see \cite{BernikDodson}, \cite{Bugeaud}, and \cite{BDV06}). However it does not appear that anyone has been successful in allowing non-monotonic approximating functions in the $p-$adic setting. A naive attempt to do this with Lutz's setup leads to serious issues, which we will address in detail in Section \ref{zeroone}. For now it will suffice to say that these issues demand certain modifications in the setup of the problem. Here we present what we believe is the most natural $p-$adic version of the Duffin-Schaeffer Conjecture.

Recall that for any prime $p$ we can define an absolute value $| ~|_p$ on $\Q$ by writing $a/n\in\Q$ as
\[\frac{a}{n}=\frac{p^s b}{m}~\text{ with }~s\in\Z, ~p\not| ~b,m,\]
and then setting \[\left|\frac{a}{n}\right|_p=p^{-s}.\]
The field $\Q_p$ is then defined as the completion of $\Q$ with respect to $|~|_p$, and the ring of integers $\Z_p\subseteq\Q_p$ is defined by
\[\Z_p=\{x\in\Q_p : |x|_p\le 1\}.\]
The ring $\Z_p$ is a compact group under addition, so there is a unique Haar probability measure on $\Z_p$ which we will denote by $\mu_p$.

Now if $p$ is a prime and if $\psi:\N\rar\R$ is non-negative then for each $n\in\N$ we define a set $\E_n=\E_n(\psi)\subseteq\Z_p$ by
\begin{equation*}
\E_n=\bigcup_{\substack{a=-n\\(a,n)=1}}^n\left\{x\in\Z_p: \left|x-\frac{a}{n}\right|_p\le\psi (n)\right\}.
\end{equation*}
It is then reasonable to consider the set $W_p(\psi)\subseteq\Z_p$ defined by
\[W_p(\psi)=\{x\in\Z_p :x\in\E_n(\psi) \text{ for infinitely many } n\in\N\}.\]

There are initially two justifications for choosing the sets $\{\E_n\}$ as our sets of `good' approximations in $\Z_p$:
\begin{itemize}
\item[(i)] The collection of points
\begin{equation}\label{F_neqn}
\{a/n\in\Q : |a|\le n, (a,n)=1\}
\end{equation}
is dense in $\Z_p$. This will follow from Lemma \ref{largepsilemma} below.
\item[(ii)] As in the classical problems the quality of approximation required of an element of $\Q$ depends on it's complexity, which we measure as the element's absolute Weil height (i.e. the maximum of the absolute values of it's numerator and denominator in reduced form). The reason for disallowing non-reduced fractions is mentioned in the next section.
\end{itemize}
It is logical to argue that for $n\in\N$ the collection of fractions
\[\{a/n\in\Q : |a|\le n, (a,n)=1\}\cup\{n/a\in\Q : |a|\le n, (a,n)=1\}\]
is the complete choice of approximations of height $n$ in $\Q_p$. Although this choice would bring us closer to Jarnik and Lutz's original setup, it would also require us to abandon the possibility of a zero-one law (this is also demonstrated in the next section).

The $p-$adic version of the Duffin-Schaeffer Conjecture which we wish to consider is the following problem.
\begin{conjecture}\label{Q_pDSC}
For any prime $p$ and for any non-negative function $\psi:\N\rar\R$ we have that $\mu_p (W_p (\psi))=1$ if
\begin{equation}\label{Q_pDSdivcond1}
\sum_{n\in\N}\mu_p (\E_n)=\infty
\end{equation}
and $\mu_p (W_p(\psi))=0$ otherwise.
\end{conjecture}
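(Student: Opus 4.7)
The convergence half is immediate from the first Borel--Cantelli lemma applied to the sequence $(\E_n)$, so the real content is the divergence half, and the plan is to adapt the variance (second moment) method that underlies essentially all partial progress on the classical Duffin--Schaeffer Conjecture. My first step would be to establish a zero--one law for $\mu_p(W_p(\psi))$ analogous to Gallagher's theorem, since a zero--one law reduces the goal to showing merely positive measure. Gallagher's classical argument exploits translation invariance of $A_n$ by rationals $a/n$; in the $p$--adic setting the analogous translations by $a/n$ with $(a,n)=1$ and $p\nmid n$ permute each $\E_n$ and form a dense subgroup of $\Z_p$. The ultrametric structure of $\Z_p$ should actually make the density argument cleaner than in the real case, because any two balls either coincide or are disjoint.

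With a zero--one law in hand, a Chung--Erd\H{o}s style inequality reduces the divergence case to proving the quasi--independence bound
\begin{equation*}
\sum_{n,m\le N}\mu_p(\E_n\cap \E_m)\ll \left(\sum_{n\le N}\mu_p(\E_n)\right)^2
\end{equation*}
for infinitely many $N$. The next step is then to compute $\mu_p(\E_n)$ and $\mu_p(\E_n\cap \E_m)$ explicitly. Using $|a/n-b/m|_p=|am-bn|_p$ when $p\nmid nm$, together with a short case analysis when $p$ divides one denominator, the overlap estimate becomes a counting problem for pairs $(a,b)$ with $(a,n)=(b,m)=1$ satisfying a congruence modulo a power of $p$. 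One would then stratify according to $d=(n,m)$ and the $p$--adic valuation of $am-bn$, hoping to sum against factors of $\varphi(n)\varphi(m)/(nm)$ in the spirit of the Pollington--Vaughan analysis of the classical problem.

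The main obstacle is of course the same one that has resisted the classical conjecture for decades: without some regularity hypothesis on $\psi$ the overlaps need not satisfy the quasi--independence bound, since $\psi$ can be tuned so that the $\E_n$ align along common arithmetic progressions. I would attack this along two parallel routes. The first is a direct $p$--adic transcription of the extremal/graph--theoretic method of Pollington and Vaughan, leveraging the fact that intersections of $\E_n$ and $\E_m$ in $\Z_p$ live on a sparse structured set controlled by gcd conditions. The second is the transfer strategy advertised in the abstract: because the only non--Archimedean ingredient in the definition of $\E_n$ is the local absolute value, one can hope for an arithmetic equivalence between the divergence of $\sum \mu_p(\E_n)$ and of $\sum \lambda(A_n)$, and thereby import any progress from the classical side (and, symmetrically, export progress from the $p$--adic side back to $\R/\Z$). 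I expect the transfer route to be where real leverage is available, but in the end both strategies must confront the same quasi--independence estimate, so in the best case this plan reduces Conjecture \ref{Q_pDSC} to the classical Duffin--Schaeffer Conjecture rather than settling it outright.
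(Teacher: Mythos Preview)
The statement you are addressing is a \emph{conjecture}, not a theorem; the paper does not prove it and presents it as the central open problem. So there is no ``paper's proof'' to compare against, and you yourself correctly concede in your final sentence that the best realistic outcome of your plan is a reduction to the classical Duffin--Schaeffer Conjecture rather than an outright proof.

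That said, your strategy maps closely onto what the paper actually carries out as partial progress. The convergence half via Borel--Cantelli is dispatched exactly as you say. The zero--one law is indeed established (Lemma~\ref{zeroonelem}), though by a different mechanism than you propose: your idea of using translations by $a/n$ is problematic because such a translation need not permute $\E_n$ (the shifted centres $(a+a')/n$ need not satisfy the coprimality and range constraints), and in any case does not send $\E_m$ into $\E_m$ for $m\neq n$. The paper instead builds an explicit digit-shift map $\tau_p$ on $p$-adic expansions, invokes Cassels' contraction lemma to show $\mu_p(W_p(\psi))=\mu_p(W_p(p^i\psi))$ for all $i$, and proves $\tau_p^j(W_p(\psi))\subseteq\bigcup_{i\ge j}W_p(p^i\psi)$ before finishing with a density argument. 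Your overlap analysis is developed in the paper via group-ring identities (Theorem~\ref{ZGaddthm} and Lemma~\ref{Q_pRoverlaplem}), and the transfer strategy you sketch is precisely the content of Theorems~\ref{Q_pRtransferthm} and~\ref{RQ_ptransferthm}: $(QIA_\infty)$ implies Conjecture~\ref{Q_pDSC} for every prime, and $(QIA_p)$ for a single prime implies most of the classical conjecture.

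The genuine gap, then, is the one you name: neither your plan nor the paper establishes the quasi-independence bound unconditionally, because doing so is essentially equivalent in strength to the classical conjecture, which at the time of the paper was open. Your proposal is an accurate reconnaissance of the terrain, not a proof.
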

We will establish a strong connection between this conjecture and the Duffin-Schaeffer Conjecture. In order to present our main results we must introduce the concept of quasi-independence on average. For our considerations it will be sufficient to make the following two definitions.
\begin{enumerate}
\item[(i)] Suppose $\psi:\N\rar\R$ is a non-negative function. We will say that {\em quasi-independence on average holds in $\R$ for $\psi$}, and abbreviate this by saying that {\em ($QIA_\infty ,\psi$) holds}, if
    \begin{equation*}
    \limsup_{N\rar\infty}\left(\sum_{n\le N}
    \lambda(A_n)\right)^2\left(\sum_{m,n\le N}
    \lambda(A_m\cap A_n)\right)^{-1}>0.
    \end{equation*}
    If ($QIA_\infty ,\psi$) holds for all $\psi$ which satisfy (\ref{DSdivcond1})
    then we will say that {\em ($QIA_\infty$) holds}.
\item[(ii)] If $p$ is a prime then we will say that {\em quasi-independence on average holds in $\Q_p$ for $\psi$}, and abbreviate this by saying that {\em ($QIA_p ,\psi$) holds}, if
    \begin{equation*}
    \limsup_{N\rar\infty}\left(\sum_{n\le N}
    \mu_p(\E_n)\right)^2\left(\sum_{m,n\le N}
    \mu_p(\E_m\cap \E_n)\right)^{-1}>0.
    \end{equation*}
    If ($QIA_p ,\psi$) holds for all $\psi$ which satisfy
    (\ref{Q_pDSdivcond1}) then we will say that {\em ($QIA_p$) holds}.
\end{enumerate}

It is well known that if $\psi:\N\rar\R$ is a non-negative function which satisfies (\ref{DSdivcond1}) then in order to show that $\lambda (W_\infty (\psi))=1$ it is sufficient to show that ($QIA_\infty, \psi$) holds (see \cite[Lemma 2.3]{HarmanMNT}). {\em All currently known results about the Duffin-Schaeffer Conjecture can be proved by using this fact.} Similarly if $p$ is a prime and if $\psi$ satisfies (\ref{Q_pDSdivcond1}) then if ($QIA_p, \psi$) holds it follows readily that $\mu_p(W_p(\psi))>0$. We can get from there to full measure with the aid of our zero-one law, Lemma \ref{zeroonelem} below. In light of these observations we now present the first main results of this paper.
\begin{theorem}\label{Q_pRtransferthm}
For any fixed prime $p$ if ($QIA_p$) holds then the Duffin-Schaeffer Conjecture is true in all cases when the function $\psi$ is supported on a set $S\subseteq\N$ with the property that there exists an $N\in\N$ for which $p^{N+1}\not| ~ n$ for any $n\in S$.
\end{theorem}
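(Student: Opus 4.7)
The plan is to show that the hypothesis $(QIA_p)$ forces $(QIA_\infty,\psi)$ for the given $\psi$; once this is verified, the variance argument \cite[Lemma 2.3]{HarmanMNT} together with Gallagher's $0$-$1$ law yields $\lambda(W_\infty(\psi))=1$. I first reduce to $\psi(n)<1/(2n)$ for every $n\in S$, so that the arcs making up each $A_n$ are pairwise disjoint and $\lambda(A_n)=2\varphi(n)\psi(n)$.

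Next, since every $n\in S$ factors as $n=p^kn'$ with $0\le k\le N$ and $p\nmid n'$, and since for $p\mid n$ the centres $a/n$ sit outside $\Z_p$, I push $\psi$ onto the set of $p$-free integers by defining a companion function
\begin{equation*}
\tilde\psi(n')\;=\;\sum_{k=0}^{N}\varphi(p^k)\,\psi(p^kn')\qquad\text{for }p\nmid n',
\end{equation*}
and $\tilde\psi(m)=0$ for $p\mid m$. The weight $\varphi(p^k)$ is chosen so that, after rounding $\tilde\psi(n')$ down to the nearest power of $p$ (which, by the smallness reduction, still forces the balls comprising $\E_{n'}$ to be disjoint), one has
\begin{equation*}
\mu_p\bigl(\E_{n'}(\tilde\psi)\bigr)\;\asymp_{p,N}\;\sum_{k=0}^{N}\lambda\bigl(A_{p^kn'}(\psi)\bigr).
\end{equation*}
In particular $\sum_{n'}\mu_p(\E_{n'}(\tilde\psi))\asymp_{p,N}\sum_n\lambda(A_n(\psi))=\infty$, so the hypothesis $(QIA_p)$ gives a positive lim sup for $\bigl(\sum_{n'\le M}\mu_p(\E_{n'})\bigr)^{2}/\sum_{m',n'\le M}\mu_p(\E_{m'}\cap\E_{n'})$.

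The remaining and crucial step is to turn this $p$-adic quasi-independence back into real quasi-independence; it suffices to establish an inequality of the shape
\begin{equation*}
\sum_{m,n\le Mp^N}\lambda(A_m\cap A_n)\;\ll_{p,N}\;\sum_{m',n'\le M}\mu_p(\E_{m'}\cap\E_{n'}).
\end{equation*}
I would approach this via the product formula $\prod_v|x|_v=1$ for $x=an-bm\in\Z\setminus\{0\}$: archimedean smallness of $an-bm$ forces, via this identity and the trivial bound $|an-bm|_\ell\le 1$ at every finite prime, a compensating $p$-adic divisibility, which is exactly what the $p$-adic intersection condition records. The hypothesis $v_p(n)\le N$ is essential to keep the distortion between the two sides bounded by a constant depending only on $p$ and $N$.

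I expect the main obstacle to be exactly this last inequality. The geometries of archimedean and $p$-adic closeness are genuinely different, and translating between them uniformly in $(m,n)$ requires a careful combinatorial accounting of how the coprimality constraints $(a,m)=(b,n)=1$ interact with the local factors $|an-bm|_\ell$ at primes $\ell\ne p$. Once this comparison is in hand, combining it with $(QIA_p,\tilde\psi)$ delivers $(QIA_\infty,\psi)$ and hence the theorem.
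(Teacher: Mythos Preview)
Your proposal has the right overall architecture---reduce to small $\psi$, push the problem to integers coprime to $p$, invoke $(QIA_p)$ on a companion function, and translate back---but the crucial comparison step is not only unproved, it cannot be proved by the mechanism you suggest. The product formula $\prod_v|an-bm|_v=1$ goes in the wrong direction: if $|an-bm|_\infty$ is small, then using $|an-bm|_\ell\le 1$ for $\ell\ne p$ you get $|an-bm|_p\ge 1/|an-bm|_\infty$, a \emph{lower} bound on the $p$-adic absolute value. Archimedean closeness of a fixed pair $(a,b)$ therefore obstructs, rather than forces, $p$-adic closeness of that same pair. What you actually need is not a pointwise relation between the two metrics but a comparison of the \emph{counts} of pairs satisfying each condition; these counts happen to agree (up to constants), but for a reason orthogonal to the product formula.

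In the paper the comparison is obtained via an identity in the group ring of $\Q/\Z$ (Theorem~\ref{ZGaddthm}): for $p\nmid m,n$ and $\Delta=p^{-N}$, both
\[
\#\{(a,b):\|a/m-b/n\|\le\Delta\}\quad\text{and}\quad\#\{(a,b):|a/m-b/n|_p\le\Delta\}
\]
are expressed through the same sum $\varphi(d)\sum_{e|d'}c(d',e)\#\{a\le mn\Delta/(de):(a,mn/(de))=1\}$, the first because an archimedean constraint of size $\Delta$ on $a/m-b/n$ confines $an-bm$ to an interval of length $\asymp mn\Delta$, the second because the $p$-adic constraint confines $an-bm$ to a residue class modulo $p^N=1/\Delta$, and both restrictions cut down by the same factor. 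This yields Lemma~\ref{Q_pRoverlaplem}, which is the genuine bridge between $\lambda(A_m\cap A_n)$ and $\mu_p(\E_m\cap\E_n)$.

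A second, smaller point: rather than defining $\tilde\psi(n')=\sum_{k=0}^{N}\varphi(p^k)\psi(p^kn')$ and then having to control mixed overlaps $\lambda(A_{p^jm'}\cap A_{p^kn'})$ for all $j,k$, the paper first proves the special case $\psi(n)=0$ for $p\mid n$ as a separate lemma (Lemma~\ref{Q_pRtransferlem}), and then reduces to it by pigeonholing on a \emph{single} $0\le k\le N$ with $\sum_{m\in S'}\lambda(A_{p^km}(\psi))=\infty$. This avoids the cross terms entirely and keeps the group-ring computation to a comparison of $\lambda(A_{p^km}\cap A_{p^kn})$ with $\lambda(A_m(2p^k\psi')\cap A_n(2p^k\psi'))$, which is again handled by Theorem~\ref{ZGaddthm}.
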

\begin{corollary}\label{Q_pRtransfercor}
If ($QIA_p$) holds for some prime $p$ then the Duffin-Schaeffer Conjecture is true in all cases when the function $\psi$ is supported on the set of squarefree integers.
\end{corollary}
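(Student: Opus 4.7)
The plan is to derive this corollary as an immediate specialization of Theorem \ref{Q_pRtransferthm}. The hypothesis of the corollary supplies a prime $p$ for which $(QIA_p)$ holds; I would fix such a $p$ once and for all. With $p$ fixed, the only thing left to verify is that the support hypothesis appearing in Theorem \ref{Q_pRtransferthm} is automatically satisfied when $\psi$ is supported on the squarefree integers.

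Let $S = \supp(\psi)$, which by assumption consists of squarefree positive integers. By the definition of squarefree, every $n \in S$ satisfies $q^2 \nmid n$ for every prime $q$; in particular $p^2 \nmid n$. Setting $N = 1$ gives $p^{N+1} = p^2$, and the previous sentence shows $p^{N+1} \nmid n$ for every $n \in S$. Hence the hypothesis of Theorem \ref{Q_pRtransferthm} is fulfilled for this $p$, this $N$, and this $S$, and the theorem delivers the Duffin-Schaeffer conclusion for $\psi$, which is exactly what the corollary asserts.

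There is no substantive obstacle here: all of the work lives in Theorem \ref{Q_pRtransferthm}. The only conceptual remark worth making is that the corollary requires $(QIA_p)$ to hold for just one prime $p$, chosen at will. This is consistent with the hypothesis because the squarefree condition is a simultaneous constraint for every prime at once; once we have picked any $p$ for which $(QIA_p)$ is known, the same $N = 1$ works uniformly in $S$, so no compatibility issue between different primes arises.
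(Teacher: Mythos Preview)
Your proposal is correct and matches the paper's approach exactly: the paper simply remarks that the corollary is a trivial consequence of Theorem~\ref{Q_pRtransferthm} and gives no further argument. Your choice $N=1$ is precisely the intended specialization.
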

\begin{theorem}\label{RQ_ptransferthm}
If ($QIA_\infty$) holds then Conjecture \ref{Q_pDSC} is true for every prime $p$.
\end{theorem}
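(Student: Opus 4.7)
The plan is to transfer Conjecture \ref{Q_pDSC} to the real hypothesis $(QIA_\infty)$ by building, from any $\psi$ satisfying (\ref{Q_pDSdivcond1}), a companion real function $\tilde\psi$ whose single-set measure and pairwise overlaps mirror those of $\psi$. Two preliminary reductions are available. Since $p$-adic balls have radii in $p^{\mathbb{Z}}$, replacing $\psi(n)$ by the largest $p^{-t(n)}\le\psi(n)$ preserves every $\E_n$, so we may assume $\psi(n)=p^{-t(n)}$. Moreover, if $p\mid n$ and $(a,n)=1$ then $|a/n|_p=p^{v_p(n)}>1$, and the ultrametric inequality forces $\{x\in\Z_p:|x-a/n|_p\le\psi(n)\}$ to be either empty or all of $\Z_p$; if the latter occurs for infinitely many $n$ the theorem is immediate, and otherwise we may truncate $\psi$ to the integers coprime to $p$ without affecting (\ref{Q_pDSdivcond1}). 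A further Borel-Cantelli-style truncation lets us assume $\mu_p(\E_n)<1$, so the balls inside $\E_n$ are disjoint and $\mu_p(\E_n)=2\varphi(n)\psi(n)$.

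Now set $\tilde\psi:=\psi$. Then $\lambda(A_n(\tilde\psi))=2\varphi(n)\psi(n)=\mu_p(\E_n(\psi))$, so $\tilde\psi$ satisfies (\ref{DSdivcond1}) and the numerators appearing in $(QIA_\infty,\tilde\psi)$ and $(QIA_p,\psi)$ agree on the nose. By hypothesis the real ratio $(\sum_{n\le N}\lambda(A_n(\tilde\psi)))^{2}/\sum_{m,n\le N}\lambda(A_m(\tilde\psi)\cap A_n(\tilde\psi))$ has positive limsup, so the task reduces to the pairwise comparison
\begin{equation*}
\mu_p\bigl(\E_m(\psi)\cap\E_n(\psi)\bigr)\ll \lambda\bigl(A_m(\tilde\psi)\cap A_n(\tilde\psi)\bigr)
\end{equation*}
uniformly in $m,n$. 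This comparison transfers the positive limsup and produces $(QIA_p,\psi)$, from which a Chung-Erd\H{o}s type argument gives $\mu_p(W_p(\psi))>0$, and Lemma \ref{zeroonelem} promotes this to $\mu_p(W_p(\psi))=1$.

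To establish the overlap comparison, write $s=\min(t(m),t(n))$, $t=\max(t(m),t(n))$, and set
\begin{equation*}
N_k:=\#\{(a,a')\in\Z^2:an-a'm=k,\,(a,m)=(a',n)=1,\,|a|\le m,\,|a'|\le n\}.
\end{equation*}
An ultrametric unfolding of $\E_m\cap\E_n$ shows $\mu_p(\E_m\cap\E_n)\asymp p^{-t}\sum_{|k|\le 2mn,\,p^s\mid k}N_k$, while standard archimedean bookkeeping yields $\lambda(A_m(\tilde\psi)\cap A_n(\tilde\psi))\asymp p^{-t}\sum_{|k|\le 2mn/p^s}N_k$. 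Since $p\nmid\gcd(m,n)$ and $N_k=0$ unless $\gcd(m,n)\mid k$, both index sets reduce to multiples of $\gcd(m,n)$ inside ranges of matching cardinality $\asymp mn/(p^s\gcd(m,n))$: on the $p$-adic side we sift out multiples of $p^s$ from an interval of length $4mn$, on the real side we take all of a shorter interval of length $4mn/p^s$, and both are dilates of the single progression $\gcd(m,n)\Z$. The desired bound then reduces to an averaged Pollington-Vaughan type estimate showing that $N_k$ is essentially uniform across admissible $k$ in either window, so that summing $N_k$ along an arithmetic progression is dominated by summing it over a short interval of the same density. Producing this counting lemma, together with dealing with the boundary regime where the disjointness assumption on $\E_n$ degrades, is the main technical obstacle I anticipate.
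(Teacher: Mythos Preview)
Your outline follows the paper's strategy almost exactly: reduce to $p\nmid n$ and to $\psi$ taking values in $\{p^{-N}\}$, compare single-set measures, and then transfer $(QIA_\infty)$ to $(QIA_p)$ via a pairwise overlap inequality $\mu_p(\E_m\cap\E_n)\ll\lambda(A_m\cap A_n)$. The two places where your sketch remains genuinely incomplete are precisely the two places where the paper does real work.

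First, your reduction ``$\mu_p(\E_n)<1$, so the balls inside $\E_n$ are disjoint'' is not valid: balls in $\Z_p$ of a fixed radius can coincide without their union being all of $\Z_p$, so $\mu_p(\E_n)<1$ does not force $\mu_p(\E_n)=2\varphi(n)\psi(n)$. What is actually needed is $\psi(n)<1/(4n)$, and getting there is nontrivial because the intermediate regime $1/n\lesssim\psi(n)\lesssim 1$ must be disposed of first. The paper handles this via part (i) of Theorem~\ref{dimonethm} (proved independently using Lemma~\ref{mediumpsilemma}), after which Borel--Cantelli and a Cassels-type contraction give $\psi(n)<1/(4n)$. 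Your ``boundary regime'' is thus a separate case requiring its own overlap estimate, not merely a degradation to be absorbed into constants.

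Second, the overlap comparison you leave as the ``main technical obstacle'' is exactly Lemma~\ref{Q_pRoverlaplem}, and the uniformity-of-$N_k$ heuristic you sketch is not how the paper proves it. Rather than showing that $N_k$ is approximately constant in $k$, the paper invokes a group-ring identity (Theorem~\ref{ZGaddthm}, from \cite{HaynesHomma}) that expresses the product $F_m\times F_n$ explicitly as a positive combination of polynomials $F_{mn/de}$; both the archimedean and $p$-adic overlap sums then become identical expressions in terms of counts of integers coprime to $mn/de$ in intervals of length $\asymp mn\Delta/de$, and the comparison follows structurally rather than from any equidistribution of $N_k$. Your proposed route via uniformity of $N_k$ over progressions versus intervals may be workable, but it is not the paper's argument and would itself require a Pollington--Vaughan style sieve; the group-ring identity sidesteps this entirely.
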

After proving Theorems \ref{Q_pRtransferthm} and \ref{RQ_ptransferthm} it will be relatively easy to deduce nontrivial results about Conjecture \ref{Q_pDSC} itself. As in the classical case one direction of the conjecture follows trivially from the Borel-Cantelli Lemma, and the nontrivial case is when (\ref{Q_pDSdivcond1}) holds. By transferring known results about the Duffin-Schaeffer Conjecture to the fields $\Q_p$ we will prove the following theorem.
\begin{theorem}\label{dimonethm}
Suppose that $p$ is a prime and that $\psi$ satisfies (\ref{Q_pDSdivcond1}). Then we have that $\mu_p(W_p(\psi))=1$ whenever at least one of the following conditions is satisfied.
\begin{enumerate}
\item[(i)] We have that \[\sum_{\substack{n\in\N\\ \psi (n)\ge 1/n}}\mu_p(\E_n)=\infty.\]
\item[(ii)] We have that
\begin{equation*}
\limsup_{N\rar\infty}\frac{\sum_{n\le N}\varphi (n)\psi (n)}{\sum_{n\le N}n\psi (n)}>0.
\end{equation*}
\item[(iii)] For some $\gamma\in\R$ the sequence ${n^{-\gamma}\psi (n)}$ is non-increasing on the set $\N\setminus p\N$.
\item[(iv)] We have that $\psi (n)\ll n^{-2}$ as $n\rar\infty$.
\item[(v)] For some $\epsilon >0$,
\begin{equation*}
\sum_{n\in\N}\varphi (n)(\psi (n))^{1+\epsilon}=\infty.
\end{equation*}
\end{enumerate}
\end{theorem}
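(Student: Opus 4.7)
The plan is to derive each of the five cases from a corresponding known partial result about the classical Duffin-Schaeffer Conjecture by means of a transfer argument modelled on the proof of Theorem \ref{RQ_ptransferthm}. Crucially, in each of the cases (ii)--(v) the existing classical proof for the $\psi$ in question proceeds by verifying $(QIA_\infty,\psi)$ (this is the content of the remark quoted earlier from \cite{HarmanMNT}). Case (i) is slightly different in character and I would handle it separately by an elementary covering argument.

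For cases (ii)--(v) I would proceed in four steps. First, under the relevant hypothesis one establishes comparisons of the form
\begin{equation*}
\mu_p(\E_n)\asymp\varphi(n)\psi(n)\asymp\lambda(A_n),
\end{equation*}
valid outside a negligible set of $n$ (one may harmlessly excise the $n$ with $\psi(n)\ge 1/n$, whose contribution is controlled by case (i)), together with a parallel bound $\mu_p(\E_m\cap\E_n)\ll \lambda(A_m\cap A_n)$ for the pairwise intersections. The intersection bound is the delicate part, since both intersection measures reduce to counting pairs $(a,b)$ with $(a,m)=(b,n)=1$ and $|an-bm|_v$ small for $v=\infty$ or $v=p$; the parallel count is exactly the machinery that must appear inside the proof of Theorem \ref{RQ_ptransferthm}. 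Second, these comparisons transfer the known $(QIA_\infty,\psi)$ into $(QIA_p,\psi)$. Third, $(QIA_p,\psi)$ combined with the divergence hypothesis \eqref{Q_pDSdivcond1} yields $\mu_p(W_p(\psi))>0$ via the standard divergence Borel--Cantelli argument (\cite[Lemma 2.3]{HarmanMNT} applied in $\Z_p$). Fourth, the zero-one law Lemma \ref{zeroonelem} upgrades positive measure to full measure.

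Case (i) I would treat separately: on the subset of $n$ with $\psi(n)\ge 1/n$ every constituent ball of $\E_n$ has $\mu_p$-measure at least $1/n$, so $\mu_p(\E_n)\gg \varphi(n)/n$ on that set and the divergence of $\sum_{\psi(n)\ge 1/n}\mu_p(\E_n)$ drives the same density argument that underlies Lemma \ref{largepsilemma}, giving $\mu_p(W_p(\psi))=1$ directly without any appeal to $(QIA_p,\psi)$.

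The main obstacle is the intersection comparison inside step one of cases (ii)--(v). Near-collisions of reduced fractions look genuinely different in $\Q_p$ than in $\R$: the condition ``$|an-bm|$ small'' pins $(a,b)$ to an interval of $\Z^2$ in the archimedean setting, but to a union of residue classes modulo a power of $p$ in the $p$-adic setting. Showing that the two resulting double sums over $(m,n)$ are comparable---at least away from pairs whose $p$-divisibilities are badly skewed, whose contribution must be bounded separately and shown to be negligible in the $\limsup$ defining $(QIA_p,\psi)$---is the real technical content of the theorem.
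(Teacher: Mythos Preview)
Your overall framework for cases (ii)--(v) is sound and agrees in spirit with the paper: for (iv) and (v) the paper does exactly what you propose, invoking the known fact that $(QIA_\infty,\psi)$ holds under those hypotheses and then transferring via the overlap comparison (which is Lemma~\ref{Q_pRoverlaplem}, the very machinery you correctly identify as living inside the proof of Theorem~\ref{RQ_ptransferthm}). For (ii) the paper takes a slightly more direct route, working \emph{inside} $\Z_p$ and bounding $\mu_p(\E_m\cap\E_n)\le 6mn\psi(m)\psi(n)$ straight from the $p$-adic overlap estimate~(\ref{Q_poverlap1}), without passing through~$\R$; for (iii) the paper reduces to (ii) by a partial-summation argument using $\sum_{n\le N,\,p\nmid n}\varphi(n)\sim cN^2$. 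Your transfer approach would also cover (ii) and (iii), though for (iii) you should note that the monotonicity hypothesis is stated only on $\N\setminus p\N$, so you would need the classical $(QIA_\infty)$ argument for $\psi$ supported on that set---still routine, but worth flagging.

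Case (i), however, has a genuine gap. You claim that the divergence of $\sum_{\psi(n)\ge 1/n}\mu_p(\E_n)$ ``drives the same density argument that underlies Lemma~\ref{largepsilemma}'' and yields full measure \emph{without} any quasi-independence. But Lemma~\ref{largepsilemma} is a statement about a \emph{single} set $\E_n$: it shows that once $\psi(n)>4^{\omega(n)}/n$ that one set is all of $\Z_p$. It says nothing about the $\limsup$ of a sequence of sets, and the lower bound $\mu_p(\E_n)\gg\varphi(n)/n$ by itself cannot give $\mu_p(W_p(\psi))>0$ without control on pairwise overlaps---divergence of the measure sum is never enough on its own. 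The paper's treatment of (i) is in fact the most delicate of the five: it truncates $\psi$ above by $1/12\varphi(n)$, rounds to $p$-adic values, disposes of a residual set where $\varphi(n)>n/12p$ by appealing to part (ii), and then invokes \emph{both} parts of Lemma~\ref{mediumpsilemma}---the first for the measure lower bound, the second for the crucial overlap estimate $\mu_p(\E_m\cap\E_n)\ll\mu_p(\E_m)\mu_p(\E_n)$. That overlap estimate \emph{is} $(QIA_p,\psi)$ for this range of $\psi$, and its proof passes through the Pollington--Vaughan archimedean overlap bound. So (i) is not an elementary covering case at all; it requires exactly the pairwise-intersection technology you were hoping to avoid.
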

Furthermore we will also prove a higher dimensional result in which we consider simultaneous approximation in combinations of fields from the collection $\R\cup\{\Q_p:p \text{ prime}\}$. Suppose that $\ell$ is a non-negative integer, $k\in\N$, and  $p_1,\ldots ,p_k$ are (not necessarily distinct) primes. We can define a norm $|~|_{\ell, p_1,\ldots,p_k}$ on the vector space $\R^{\ell}\times\Q_{p_1}\times\cdots\times\Q_{p_k}$ over $\Q$ by
\begin{equation*}
|(x_1,\ldots ,x_{k+\ell})|_{\ell, p_1,\ldots,p_k}=\max\{\|x_1\|,\ldots ,\|x_\ell\|,|x_{\ell+1}|_{p_1},\ldots ,|x_{\ell+k}|_{p_k}\},
\end{equation*}
where for $x\in\R$ we have set $\|x\|=\min_{a\in\Z}|x-a|$. Since the absolute value $\|~\|$ is well defined modulo $\Z$ we may also regard $|~|_{\ell, p_1,\ldots,p_k}$ as a norm on $(\R/\Z)^{\ell}\times\Z_{p_1}\times\cdots\times\Z_{p_k}$. A natural probability measure on the latter space is the product measure
\[\mu_{\ell, p_1,\ldots,p_k}=\lambda^\ell\times\prod_{i=1}^k\mu_{p_i}.\]

Now for a non-negative arithmetical function $\psi$ and for each positive integer $n$ we may define a set $\E_{n}^{\ell,p_1,\ldots ,p_k}=\E_{n}^{\ell,p_1,\ldots ,p_k}(\psi)\subseteq (\R/\Z)^{\ell}\times\Z_{p_1}\times\cdots\times\Z_{p_k}$ by
\begin{align*}
\E_{n}^{\ell,p_1,\ldots ,p_k}=\bigcup_{\substack{a_1=-n\\ (a_1,n)=1}}^n\cdots\bigcup_{\substack{a_{\ell+k}=-n\\ (a_{\ell+k},n)=1}}^n&\Big\{(x_1,\ldots ,x_{\ell+k})\in (\R/\Z)^{\ell}\times\Z_{p_1}\times\cdots\times\Z_{p_k}:\\
&\qquad\left|\left(x_1-\frac{a_1}{n},\ldots ,x_{\ell+k}-\frac{a_{\ell+k}}{n}\right)\right|_{\ell, p_1,\ldots,p_k}\le\psi (n)\Big\},
\end{align*}
and we may then define $W_{\ell, p_1,\ldots,p_k}(\psi)\subseteq (\R/\Z)^{\ell}\times\Z_{p_1}\times\cdots\times\Z_{p_k}$ by
\begin{align*}
W_{\ell, p_1,\ldots,p_k}(\psi)=\{(x_1,\ldots ,x_{\ell+k}) :x\in \E_{n}^{\ell,p_1,\ldots ,p_k}(\psi) \text{ for infinitely many }n\in\N\}.
\end{align*}
In Section \ref{Q_presultssec} we will prove the following theorem.
\begin{theorem}\label{highdimthm}
Suppose that $\ell$ is a non-negative integer, that $k\in\N$, and that $\ell+k>1$. If $p_1,\ldots ,p_k$ are (not necessarily distinct) primes then we have that
\begin{equation*}
\mu_{\ell, p_1,\ldots,p_k}(W_{\ell, p_1,\ldots,p_k}(\psi))=1
\end{equation*}
if and only if
\begin{equation*}
\sum_{n=1}^\infty \mu_{\ell, p_1,\ldots,p_k}(\E_n^{\ell, p_1,\ldots,p_k})=\infty.
\end{equation*}
\end{theorem}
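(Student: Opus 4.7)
The convergence direction is immediate from the Borel-Cantelli lemma applied to the product measure $\mu_{\ell, p_1, \ldots, p_k}$. For the divergence direction my plan has three steps. First, establish a zero-one law on the product space generalizing Lemma~\ref{zeroonelem}, so that it suffices to show $\mu_{\ell, p_1, \ldots, p_k}(W_{\ell, p_1, \ldots, p_k}(\psi)) > 0$. Second, apply the second-moment estimate of \cite[Lemma 2.3]{HarmanMNT} in the product space to reduce positivity of measure to a quasi-independence on average condition. Third, verify this condition by a direct arithmetic computation.

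The key structural observation is that $\E_n^{\ell,p_1,\ldots,p_k}$ factorizes as a Cartesian product across its $\ell + k$ coordinates: since the union over tuples $\vec{a}$ commutes with the Cartesian product of the one-dimensional unions, the set $\E_n^{\ell,p_1,\ldots,p_k}$ is (up to the identification $a/n \equiv (a+n)/n \pmod 1$ in the real factors) the product of $\ell$ copies of $A_n$ and one copy of the analogously defined one-dimensional $\E_n$ in $\Z_{p_j}$ for each $j = 1, \ldots, k$. Consequently $\E_m \cap \E_n$ also factorizes as a product of one-dimensional intersections, and both $\mu_{\ell, p_1, \ldots, p_k}(\E_n)$ and $\mu_{\ell, p_1, \ldots, p_k}(\E_m \cap \E_n)$ become products of one-dimensional quantities. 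The quasi-independence condition in the product space then takes the form
\[\limsup_{N \rar \infty}\frac{\Big(\sum_{n \leq N}\prod_t \alpha_t(n)\Big)^2}{\sum_{m,n \leq N}\prod_t \beta_t(m,n)} > 0,\]
where $t$ ranges over the $\ell + k$ coordinates and $\alpha_t(n), \beta_t(m,n)$ denote the relevant one-dimensional single and pairwise measures.

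The heart of the proof is the verification of this product-form condition, which I would carry out by a direct adaptation of the Pollington-Vaughan argument used in the classical higher-dimensional real case. In each individual coordinate the ratio $\beta_t(m,n)/(\alpha_t(m)\alpha_t(n))$ admits an upper bound consisting of a bounded main term plus an arithmetic correction controlled by divisor functions and $\gcd(m,n)$. Taking the product over $\ell + k \geq 2$ coordinates suppresses the correction enough to make the double sum comparable to the square of the single sum; this is precisely the saving which Pollington and Vaughan exploited in dimension two, and it is available to us here by the hypothesis $\ell + k > 1$. The $p$-adic coordinates are in fact better behaved than the real ones, because the ultrametric forces balls to be either disjoint or nested and so eliminates some of the intersection-overlap bookkeeping. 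I expect the main technical obstacle to lie in organising the resonance count uniformly across real and $p$-adic factors and in establishing the product-space zero-one law; neither should present a serious difficulty, since both arguments decouple coordinate by coordinate.
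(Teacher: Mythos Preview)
Your outline is correct and matches the paper's strategy in its broad strokes: factorize $\E_n^{\ell,p_1,\ldots,p_k}$ as a Cartesian product, reduce to a quasi-independence-on-average estimate via the second moment method, and upgrade positive measure to full measure via the zero-one law of Lemma~\ref{zeroonelem}. Where you diverge from the paper is in how you propose to control the $p$-adic overlap factors $\mu_{p_i}(\E_m\cap\E_n)$. You suggest bounding each coordinate's ratio $\beta_t(m,n)/(\alpha_t(m)\alpha_t(n))$ directly and then multiplying, remarking that the ultrametric makes the $p$-adic side easier. The paper instead invokes Lemma~\ref{Q_pRoverlaplem} to transfer each $p$-adic overlap back to a real one,
\[
\mu_{p_i}(\E_m\cap\E_n)\le\tfrac{3}{2}\,\lambda(A_m(2\psi)\cap A_n(2\psi)),
\]
so that after factorization the entire double sum is bounded by $\sum_{m,n\le N}\lambda(A_m(2\psi)\cap A_n(2\psi))^{\ell+k}$, and the known real Pollington--Vaughan inequality~(\ref{pollvaughbound1}) finishes the job in one stroke. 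This buys a cleaner argument with no new arithmetic.

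One caution about your route: the ultrametric does make the \emph{geometry} of balls simpler, but the overlap estimate is fundamentally an \emph{arithmetic} count of pairs $(a,b)$ with $|a/m-b/n|_{p}\le\Delta$, and that count is no easier than its Archimedean analogue. Indeed the paper's Lemma~\ref{Q_pRoverlaplem} shows the two are comparable, and its proof relies on the group-ring identity of Theorem~\ref{ZGaddthm} together with some care about choices of representatives modulo~$1$. If you pursue a direct $p$-adic bound you will end up reproducing that work. You should also not omit the preliminary reduction to $\psi(n)<1/4n$ (handling larger $\psi$ via an analogue of part~(i) of Theorem~\ref{dimonethm}); without it the component balls of $\E_n$ need not be disjoint and the measure formulas you are implicitly using fail.
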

This theorem is a generalization of the well known Pollington-Vaughan Theorem proved in \cite{VaughanPollington}.

In Section \ref{hausdorffsection} we explore the natural Hausdorff measure generalizations of our problems. By using recent results due to V.~Beresnevich and S.~Velani (\cite{BeresnevichVelani06}) we show that if Conjecture \ref{Q_pDSC} is true then all of the corresponding Hausdorff measure conjectures for $\Q_p$ follow (Theorem \ref{Q_pMTPthm2}). Similarly we show how Theorem \ref{highdimthm} implies all of its Hausdorff measure analogues (Theorem \ref{Q_pMTPthm1}). These results may come as somewhat of a surprise because the Hausdorff measure variants of conjectures of this type are usually thought of as refinements of the original problems.

Finally in Section \ref{conclusionsec} we present three problems which we hope will inspire further research into the real and $p-$adic versions of the Duffin-Schaeffer Conjecture.

{\em Acknowledgements:} For their encouragement and support I would like to thank Sanju Velani and Victor Beresnevich.

\section{Zero-one laws}\label{zeroone}
As a point of reference we begin from Jarnik and Lutz's original setup, although with a different notation. If $p$ is a prime and $\psi$ is a non-negative arithmetical function then for each $n\in\N$ we may define $\E_n'=\E_n'(\psi)\subseteq\Z_p$ by
\begin{equation*}
\E_n'=\bigcup_{a=-n}^n\left(\left\{x\in\Z_p: \left|x-\frac{a}{n}\right|_p\le\psi (n)\right\}\cup\left\{x\in\Z_p: \left|x-\frac{n}{a}\right|_p\le\psi (n)\right\}\right),
\end{equation*}
and we may then define $W_p'=W_p'(\psi)\subseteq\Z_p$ by
\[W_p'(\psi)=\{x\in\Z_p :x\in\E_n'(\psi) \text{ for infinitely many } n\in\N\}.\]
In \cite{Jarnik1945} and \cite[Theorems 4.22, 4.23]{Lutz} it is proved that if $\psi$ is a monotonic function which satisfies $\psi (n)<1/2n$ and other mild regularity conditions then $\mu_p (W_p'(\psi))=1$ if
\begin{equation*}
\sum_{n\in\N}n\psi (n)=\infty,
\end{equation*}
and $\mu_p (W_p'(\psi))=0$ otherwise. This is a $p-$adic version of Khintchine's Theorem and a separate proof using ubiquitous systems can be found in \cite[Section 12.6]{BDV06}.

If we want to work with non-monotonic functions $\psi$ then there are a few things to be careful of. First of all if we do not enforce coprimeness then it is possible to choose $\psi$ so that
\begin{equation*}
\sum_{n\in\N}\mu_p (\E_n')=\infty
\end{equation*}
but $\mu_p(W_p'(\psi))=0$. The example at the end of Duffin and Schaeffer's paper \cite{DuffinSchaeffer} still applies in the $p-$adic setting to illustrate how this can happen. Taking this into account we might hope to work with the subsets $\E_n''(\psi)$ and $W_p''(\psi)$ of $\Z_p$ defined by
\begin{align*}
\E_n''(\psi)&=\bigcup_{\substack{a=-n\\(a,n)=1}}^n\left(\left\{x\in\Z_p: \left|x-\frac{a}{n}\right|_p\le\psi (n)\right\}\cup\left\{x\in\Z_p: \left|x-\frac{n}{a}\right|_p\le\psi (n)\right\}\right),\\
\intertext{and}
W_p''(\psi)&=\{x\in\Z_p :x\in\E_n''(\psi) \text{ for infinitely many } n\in\N\}.
\end{align*}
However even with these sets there is still an exotic possibility, which is the failure of the zero-one law altogether. To see this let
\[\psi(n)=\begin{cases}p^{-1}&\text{ if }p|n,\\0&\text{ otherwise.}\end{cases}\]
Then if $x\in\E_n''(\psi)$ we must have that $p|n$ and that
\[\left|x-\frac{a}{n}\right|_p\le p^{-1}~\text{ or }~\left|x-\frac{n}{a}\right|_p\le p^{-1}\]
for some $-n\le a\le n$ with $(a,n)=1$. The inequality on the left is impossible since $a/n\notin\Z_p$. This leaves us with the inequality on the right, which is equivalent to
\begin{equation*}
\left|ax-n\right|_p\le p^{-1}.
\end{equation*}
Now supposing that this is satisfied then using the strong triangle inequality and the fact that $p\nmid a$ we deduce that
\[|x|_p=|ax|_p\le\max\{|ax-n|_p,|n|_p\}\le p^{-1}.\]

Conversely if we start with any $n\in p\N$ and any $x\in\Z_p$ which satisfies $|x|_p\le p^{-1}$ then for any $-n\le a\le n$ with $(a,n)=1$ we have that
\[|ax-n|_p\le \max\{|ax|_p,|n|_p\}\le p^{-1}.\] This proves that \[\E_n''(\psi)=\{x\in \Z_p:|x|_p\le p^{-1}\}\] and so we have that $\mu_p(W_p''(\psi))=p^{-1}.$

By contrast we will show that no matter how $\psi$ is chosen the sets $W_p(\psi)$ always have measure $0$ or $1$. This is the one-dimensional case of the following lemma.
\begin{lemma}\label{zeroonelem}
Suppose that $\ell$ is a non-negative integer, that $k\in\N$, and that $p_1,\ldots ,p_k$ are (not necessarily distinct) primes. Then for any $\psi$ we have that
\begin{equation*}
\mu_{\ell, p_1,\ldots,p_k}(W_{\ell, p_1,\ldots,p_k}(\psi))=0 \text{ or } 1.
\end{equation*}
\end{lemma}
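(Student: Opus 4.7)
Write $X:=(\R/\Z)^{\ell}\times\Z_{p_1}\times\cdots\times\Z_{p_k}$ and $\mu:=\mu_{\ell,p_1,\ldots,p_k}$. Then $X$ is a compact abelian topological group and $\mu$ is its normalised Haar measure, so every translation $T_g:x\mapsto x+g$ of $X$ is $\mu$-preserving. Translation by $g\in X$ is ergodic whenever $g$ generates a dense cyclic subgroup, and such $g$ exist in abundance (e.g.\ any $g$ whose real coordinates form a $\Q$-linearly independent set together with $1$ and whose $p_i$-adic coordinates are $p_i$-adic units of infinite additive order). The plan is therefore to show that $W:=W_{\ell,p_1,\ldots,p_k}(\psi)$ is, up to a $\mu$-null set, invariant under translation by a dense subgroup of $X$; an approximation argument then produces an ergodic translation fixing $W$ modulo nullsets, from which $\mu(W)\in\{0,1\}$ follows immediately.

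The main step is to verify this (near-)invariance for a dense collection of rational translates. Fix a positive integer $q$ with $\gcd(q,p_1\cdots p_k)=1$, and let $r=(b_1/q,\ldots,b_{\ell+k}/q)\in X$, where $1/q$ is interpreted in $\R/\Z$ in the real coordinates and as the multiplicative inverse of $q$ in $\Z_{p_i}$ in the $p_i$-adic coordinates. The Chinese Remainder Theorem, together with density of $\Q$ in $\R$, shows that as $q$ and $\mathbf{b}$ vary the resulting $r$'s are dense in $X$. I will argue that $\mu\bigl(W\triangle(W+r)\bigr)=0$. The idea is that for each $n$ with $\gcd(n,q)=1$ and each $(a_i,n)=1$, one has $a_i/n+b_i/q=(a_i q+b_i n)/(nq)$; since $\gcd(q,n)=1$ the numerator is coprime to $n$, and by a routine case analysis (splitting on the $\gcd$ of the numerator with $q$) the translate $\E_n^{\ell,p_1,\ldots,p_k}+r$ is contained in the union of finitely many sets of the form $\E_{nq/d}^{\ell,p_1,\ldots,p_k}$, with total measure essentially $\mu(\E_n^{\ell,p_1,\ldots,p_k})$. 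Taking $\limsup$ over $n$ coprime to $q$ shows that $(W+r)\subseteq W$ modulo a null set, and the reverse inclusion is symmetric. The contribution from $n$ not coprime to $q$ is handled separately by observing that either the tail $\sum_{q\mid n}\mu(\E_n)$ converges (in which case that tail is irrelevant by Borel--Cantelli) or it diverges (in which case, after reindexing by $n':=n/\gcd(n,q^\infty)$, the same CRT/coprimality bookkeeping applies).

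\textbf{Main obstacle.} The bookkeeping in the previous paragraph is the delicate part, because the coprimality restriction $(a,n)=1$ in the definition of $\E_n^{\ell,p_1,\ldots,p_k}$ means that a rational translation does not send one approximation set cleanly to another of the same index: the numerators $a_iq+b_in$ may share unexpected common factors with $nq$, and one must check that the surviving reduced fractions still account for $\mu$-almost every point of $W+r$. The cleanest way to manage this is to treat the translation by $r=\mathbf b/q$ as the composition of translations by $b_i/q$ in each coordinate separately and to carry out the argument one coordinate at a time, reducing to the one-dimensional case in each factor (real or $p_i$-adic). Once this is done the ergodic conclusion is standard: choose a sequence $(r_m)\subset X$ converging to $0$ along rational vectors of the above form, deduce $\mu((W+r_m)\triangle W)\to 0$, and apply the classical fact that any measurable subset of a compact abelian group that is invariant under a sequence of translations tending to $0$ in total-variation must have measure $0$ or $1$.
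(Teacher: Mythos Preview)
Your approach has a genuine gap at the heart of the invariance step. You claim that for $(n,q)=1$ the translate $\E_n^{\ell,p_1,\ldots,p_k}+r$ is contained in a finite union of sets $\E_{nq/d}^{\ell,p_1,\ldots,p_k}$. But $\E_m$ is, by definition, a union of balls of radius $\psi(m)$, whereas $\E_n+r$ is a union of balls of radius $\psi(n)$ with centres that reduce to fractions of denominator dividing $nq$. Since $\psi$ is an \emph{arbitrary} non-negative function, $\psi(nq/d)$ bears no relation whatsoever to $\psi(n)$; it may well be zero. (Take, for instance, $\psi$ supported on primes: then $\E_{nq/d}=\emptyset$ for every $d$ once $n$ and $q$ are distinct primes.) So the asserted inclusion simply fails, and with it the claim that $W+r\subseteq W$ modulo a null set. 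This is not a bookkeeping issue with coprimality of numerators; it is structural. The limsup sets $W_{\ell,p_1,\ldots,p_k}(\psi)$ are \emph{not} translation-invariant for general $\psi$, which is precisely why zero--one laws of Gallagher type cannot be proved via additive ergodicity.

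There is a second problem at the end. The ``classical fact'' you invoke---that $\mu\bigl((W+r_m)\triangle W\bigr)\to 0$ for a sequence $r_m\to 0$ forces $\mu(W)\in\{0,1\}$---is vacuous as stated: continuity of translation in $L^1$ gives this limit for \emph{every} measurable $W$. What would suffice is exact invariance (mod nullsets) under a dense subgroup, and that is what you have not established.

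The paper proceeds by an entirely different, multiplicative route. It first reduces (via Lemma~\ref{largepsilemma}) to $\psi$ supported off $p\N$, then invokes Cassels' lemma to the effect that $\mu_p(W_p(\psi))=\mu_p(W_p(p^i\psi))$ for every integer $i$. It then builds an expanding map $\tau_p:\Z_p\to\Z_p$ (essentially the $p$-adic shift) and $\tau_\infty:x\mapsto p_1x\bmod 1$ on the real factors, and shows $\tau^j(W(\psi))\subseteq\bigcup_{i\ge j}W(p^i\psi)$. A Lebesgue density argument, together with the fact that $\mu(W(\psi))=\mu\bigl(\bigcup_i W(p^i\psi)\bigr)$, then forces the measure to be $0$ or $1$. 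The key point is that the scaling of radii under $\tau$ is exactly absorbed by Cassels' lemma---something translation cannot provide.
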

Before we give the proof of Lemma \ref{zeroonelem} we need to prove the following auxiliary result, which also serves the purpose of verifying that the collection of points (\ref{F_neqn}) is dense in $\Z_p$.
\begin{lemma}\label{largepsilemma}
If $p|n$ then $\E_n(\psi)=\emptyset$ or $\Z_p$. Also if $p\nmid n$ but $\psi (n)>4^{\omega (n)}/n$ then $\E_n (\psi)=\Z_p$.
\end{lemma}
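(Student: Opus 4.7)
The plan is to split the argument according to whether $p$ divides $n$.

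For the first statement, suppose $p\mid n$. Any $a$ with $(a,n)=1$ satisfies $p\nmid a$, so $a/n\notin\Z_p$ and $|a/n|_p=p^{v_p(n)}$; note that this common value depends only on $n$, not on the particular $a$. For any $x\in\Z_p$ we have $|x|_p\le 1<|a/n|_p$, so the strong triangle inequality yields $|x-a/n|_p=|a/n|_p=p^{v_p(n)}$. Hence each ball $\{x\in\Z_p:|x-a/n|_p\le\psi(n)\}$ is either all of $\Z_p$ (when $\psi(n)\ge p^{v_p(n)}$) or empty, and the same dichotomy holds for every admissible $a$; taking the union over $a$ gives $\E_n(\psi)=\Z_p$ or $\emptyset$.

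For the second statement, suppose $p\nmid n$ and $\psi(n)>4^{\omega(n)}/n$. If $\psi(n)\ge 1$ every ball is already $\Z_p$, so I may assume $\psi(n)<1$ and let $k\ge 1$ be the unique integer with $p^{-k}\le\psi(n)<p^{-k+1}$. In $\Z_p$ the ball of radius $\psi(n)$ around $a/n$ is then precisely the coset $a/n+p^k\Z_p$, and so $\E_n$ is the preimage in $\Z_p$ of the image of $\{a:|a|\le n,\,(a,n)=1\}$ under the map $a\mapsto a\,n^{-1}\bmod p^k$. Since $n$ is a unit modulo $p^k$, the goal becomes showing that every residue class modulo $p^k$ contains some integer $a\in[-n,n]$ coprime to $n$.

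For this I would run a M\"obius inversion over the divisors of $n$. For each residue $r\bmod p^k$ and each squarefree $d\mid n$, the pair of congruences $a\equiv r\pmod{p^k}$ and $d\mid a$ is equivalent to a single congruence modulo $dp^k$ (using $(d,p)=1$), so a standard count gives
\begin{equation*}
\#\{a\in[-n,n]:a\equiv r\pmod{p^k},\,(a,n)=1\}=\frac{2\varphi(n)}{p^k}+O\bigl(2^{\omega(n)}\bigr),
\end{equation*}
with implicit constant $1$. Combining the hypothesis $p^k<n/4^{\omega(n)}$ with the elementary bound $\varphi(n)/n\ge 2^{-\omega(n)}$ (each Euler factor is at least $1/2$) yields $2\varphi(n)/p^k>2^{\omega(n)+1}$, which strictly dominates the error $2^{\omega(n)}$. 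So the count is at least $2^{\omega(n)}\ge 1$ for every $r$, every residue class is represented, and $\E_n=\Z_p$.

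The step I expect to require the most care is calibrating this inclusion--exclusion against the error from integer rounding; this is precisely where the numerical constant $4^{\omega(n)}$ in the hypothesis is used, and it is the only genuinely arithmetic input in the argument.
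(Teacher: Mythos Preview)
Your argument tracks the paper's closely: the ultrametric case $p\mid n$ is handled the same way, and for $p\nmid n$ you both reduce to a coprime residue count via M\"obius inversion and then compare the main term $2\varphi(n)/p^{(\cdot)}$ against an error of size $O(2^{\omega(n)})$, invoking $\varphi(n)/n\ge 2^{-\omega(n)}$ (equivalently $4(1-1/q)\ge 2$ for each prime $q\mid n$).

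The slip is in the inequality $p^k<n/4^{\omega(n)}$. With your $k$ defined by $p^{-k}\le\psi(n)<p^{-k+1}$ (which does correctly identify the ball as $a/n+p^k\Z_p$), the hypothesis $\psi(n)>4^{\omega(n)}/n$ gives $1/\psi(n)<n/4^{\omega(n)}$, but from $p^{-k}\le\psi(n)$ you obtain $p^k\ge 1/\psi(n)$, which points the wrong way. The only upper bound your definition yields is $p^{k-1}<1/\psi(n)<n/4^{\omega(n)}$, i.e.\ $p^k<p\cdot n/4^{\omega(n)}$, and that extra factor of $p$ prevents the chain from closing. Your claimed bound is valid precisely when $\psi(n)$ is itself a power of $p$ (so that $p^k=1/\psi(n)$), but not in general. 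The paper sidesteps this by taking $N$ with $\psi(n)\in(p^{-(N+1)},p^{-N}]$, so that $p^N\le 1/\psi(n)<n/4^{\omega(n)}$ holds directly; this $N$ equals your $k$ or $k-1$. A smaller point: the rounding error per squarefree divisor in an interval of $2n+1$ integers is bounded by $2$ rather than $1$, so the total error should be $2^{\omega(n)+1}$ as in the paper, not $2^{\omega(n)}$.
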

\begin{proof}
For the proof of the first part of the lemma suppose that $p|n$ and that $x\in\Z_p$ satisfies
\[\left|x-\frac{a}{n}\right|_p\le \psi (n),\]
with $(a,n)=1$. Then since $|a/n|_p>1$ we must have that $\psi (n)>1$ and it follows that for any $y\in\Z_p$
\[\left|y-\frac{a}{n}\right|_p\le\max\left\{|y-x|_p,\left|x-\frac{a}{n}\right|_p\right\}\le\psi (n).\]

For the second part of the lemma we are given that $p\nmid n$. Let $x\in\Z_p$ and suppose that the $p-$adic expansion of $nx$ is
\[nx=\sum_{m=0}^\infty b_mp^m~\text{ with each }~b_m\in\{0,\ldots , p-1\}.\]
If $\psi (n)> 1$ then the proof is trivial, so without loss of generality assume that $\psi (n)\in (p^{-(N+1)},p^{-N}]$ for some non-negative integer $N$. We have that $x\in\E_n$ if and only if there exists an integer $-n\le a\le n$ with $(a,n)=1$ which satisfies
\begin{equation*}
|nx-a|_p\le p^{-N}.
\end{equation*}
If such an integer exists then it must have a $p-$adic expansion of the form
\[a=\sum_{m=0}^{N-1}b_mp^m+\sum_{m=N}^\infty c_mp^m~\text{ with each }~c_m\in\{0,\ldots , p-1\}.\]
Now it is easy to see that this problem is equivalent to determining whether or not there is an integer $-n\le a\le n$ which satisfies \[(a,n)=1~\text{ and }~a\equiv\sum_{m=0}^{N-1}b_mp^m\mod p^N.\]
Writing $b=\sum_{m=0}^{N-1}b_mp^m$ we find that
\begin{align*}
\#\{a\in\N : |a|\le n , (a,n)=1,~ a\equiv b\mod p^N\}\\
&=\sum_{\substack{|a|\le n\\a\equiv b\mod p^N}}\sum_{d|a,n}\mu (d)\\
&=\sum_{d|n}\mu (d)\sum_{\substack{|\ell |\le n/d\\ \ell\equiv bd^{-1}\mod p^N}}1\\
&=\sum_{d|n}\mu (d)\left(\frac{2n}{dp^N}+k_d\right)\\
&=\frac{2\varphi (n)}{p^N}+\sum_{d|n}k_d\mu (d)\\
&\ge 2\varphi(n)\psi (n)+\sum_{d|n}k_d\mu (d).
\end{align*}
Here each of the quantities $k_d$ is a real number which satisfies $|k_d|\le 2$. For a bound on the right hand sum we have that
\begin{equation*}
\left|\sum_{d|n}k_d\mu (d)\right|\le 2\sum_{d|n}|\mu (d)|= 2^{\omega (n)+1}.
\end{equation*}
Now we use our hypothesis and find that
\begin{align*}
2\varphi(n)\psi (n)>\frac{2\cdot 4^{\omega (n)}\varphi (n)}{n}=2\cdot\prod_{\substack{q|n\\q\text{ prime}}}4\left(1-q^{-1}\right)\ge 2^{\omega (n)+1}.
\end{align*}
This proves that
\begin{align*}
\#\{a\in\N : |a|\le n, (a,n)=1,~ a\equiv b\mod p^N\}\ge 1,
\end{align*}
which finishes the proof of Lemma \ref{largepsilemma}. The fact that the collection of points (\ref{F_neqn}) is dense in $\Z_p$ follows from the second statement of the lemma together with the observation that $4^{\omega(n)}/n\rar 0$ as $n\rar\infty$ (see \cite[Section 22.10]{HardyWright}).
\end{proof}

Now we proceed to the proof of our zero-one law.
\begin{proof}[Proof of Lemma \ref{zeroonelem}]
First of all by Lemma \ref{largepsilemma} if $\mu (\E_n(\psi))>0$ for infinitely many $n\in p\N$ then we have that $W_p(\psi)=\Z_p$. On the other hand if $\mu (\E_n(\psi))>0$ for only finitely many $n\in p\N$ then it follows that
\[\mu_p\left(\{x\in\Z_p :x\in\E_n(\psi) \text{ for infinitely many } n\in p\N\}\right)=0.\]
Thus without loss of generality we may assume that $\psi (n)=0$ whenever $p|n$.

Now let us give the proof of Lemma \ref{zeroonelem} when $\ell=0$ and $k=1$, and let us write $p_1=p$. By a lemma due originally to Cassels (see \cite{Cassels}, \cite{Gallagher}, and \cite{HarmanMNT}) expanding or contracting each of the component intervals of the sets $\E_n(\psi)$ by a factor of $p$ will not change the measure of the resulting limsup set. In other words for any integer $i$ we have that
\[\mu_p(W_p(\psi))=\mu_p(W_p(p^i\psi)).\]
If there is an integer $i$ for which $p^i\psi (n)>1/p$ for infinitely many $n$ then by the second part of Lemma \ref{largepsilemma} we could conclude that $W_p(p^i\psi)=\Z_p$ and so $\mu_p(W_p(\psi))=1$. Thus again without loss of generality we may assume that for every integer $i$ we have $p^i\psi (n)<1/p$ for all but at most finitely many $n$. Also notice that \[W_p(\psi)\subseteq W_p(p\psi)\subseteq W_p(p^2\psi)\subseteq\cdots,\] which with the above comments implies that
\[\mu_p(W_p(\psi))=\mu_p\left(\bigcup_{i=0}^{\infty}W_p(p^i\psi)\right).\]

Next define a map $\tau_p:\Z_p\rar\Z_p$ by writing $x\in\Z_p$ as
\[x=\sum_{m=0}^\infty b_mp^m~\text{ with each }~b_m\in\{0,\ldots , p-1\}\]
and then setting
\[\tau_p (x)=\begin{cases}\sum_{m=0}^\infty b_{m+1}p^m&\text{ if }b_0=0,\text{ and }\\ 1+\sum_{m=0}^\infty b_{m+1}p^m&\text{ otherwise }.\end{cases}\]
It is easy to see that if $M$ is a positive integer and $B\subseteq\Z_p$ is a ball of radius $1/p^M$ then $\mu_p (\tau_p(B))=1/p^{M-1}.$ It follows from this that if $B'$ is a measurable subset of $\Z_p$ which is contained in a ball of radius $1/p$ then $\mu_p (\tau_p(B'))=p\cdot\mu_p(B')$.

If $x\in\E_n(\psi)$ then since $p\nmid n$ we have that
\[\left|nx-a\right|_p\le\psi (n)\]
for some $-n\le a\le n$ with $(a,n)=1$. If $\psi (n)<1/p$ then this implies that
\[nb_0\equiv a\mod p.\]
In the case when $b_0=0$ we have
\[\left|n\tau_p(x)-\frac{a}{p}\right|_p\le p\psi (n),\]
whereas if $b_0\not= 0$ then
\[\left|n\tau_p(x)-\frac{a+(p-b_0)n}{p}\right|_p\le p\psi (n).\]
In the first case $a/p$ is an integer in $[-n,n]$ which is coprime to $n$, and in the second $(a+(p-b_0)n)/p$ is such an integer. Iterating this argument we find that if $j$ is a non-negative integer then
\[\tau_p^j(W_p(\psi))\subseteq \bigcup_{i=j}^\infty W_p(p^i\psi).\]

Now suppose that $\mu_p(W_p(\psi))>0$. By a density argument which follows easily from the Lebesgue Density Theorem on $\R$, for any $\epsilon >0$ we can find integers $x_0\in\Z$ and $M\in\N$ with the properties that
\[\mu_p\left(\{x\in W_p(\psi):|x-x_0|_p\le p^{-M}\}\right)\ge (1-\epsilon)p^{-M}.\]
Then we have that
\[\tau^M\left(\{x\in W_p(\psi):|x-x_0|_p\le p^{-M}\}\right)\subseteq \left(\bigcup_{i=M}^{\infty}W_p(p^i\psi)\right)\]
and that
\[\mu_p\left(\tau^M\left(\{x\in W_p(\psi):|x-x_0|_p\le p^{-M}\}\right)\right)=p^M\cdot\mu_p\left(\{x\in W_p(\psi):|x-x_0|_p\le p^{-M}\}\right).\]
This implies that
\[\mu_p(W_p(\psi))>1-\epsilon\]
and since $\epsilon$ is arbitrary we conclude that $\mu_p(W_p(\psi))=1$.

The higher dimensional sets can be treated in much the same way. We will outline the proof and leave the details to the reader. Suppose that $\ell$ is a non-negative integer and that $k\in\N$. By induction on $k$ we may assume as before that $\psi(n)=0$ whenever $p_i|n$ for some $1\le i\le k$. We define maps $\tau_{p_i}:\Z_{p_i}\rar\Z_{p_i}$ exactly as above and we supplement this by defining $\tau_\infty:\R/\Z\rar\R/\Z$ by
\[\tau_\infty (x)=p_1x\mod 1.\]
Like the maps $\tau_{p_i}$ the map $\tau_\infty$ is metrically transitive (\cite{Gallagher}), which means that any set which is mapped into itself by $\tau_\infty$ must have measure zero or one. If $x\in A_n (\psi)$ then we have that
\[\left\|x-\frac{a}{n}\right\|\le \psi(n)\]
for some $1\le a\le n$ with $(a,n)=1$, which implies that
\[\left\|\tau_\infty(x)-\frac{p_1a}{n}\right\|\le p_1\psi(n).\]
Since we are working in $\R/\Z$ we can replace $p_1a$ by its least positive representative modulo $n$. In this way we find that $\tau_\infty(W_\infty(\psi))\subseteq W_\infty(p_1\psi)$.

Finally we define a map $\tau_{\ell,p_1,\ldots ,p_k}$ from $(\R/\Z)^{\ell}\times\Z_{p_1}\times\cdots\times\Z_{p_k}$ onto itself by setting
\[\tau_{\ell,p_1,\ldots ,p_k}(x_1,\ldots , x_\ell,x_{\ell+1},\ldots ,x_{\ell+k})=(\tau_\infty(x_1),\ldots , \tau_\infty(x_\ell),\tau_{p_1}(x_{\ell+1}),\ldots ,\tau_{p_k}(x_{\ell+k})).\] The rest of the proof can then be finished by the same arguments which we used in the one-dimensional case.
\end{proof}

\section{Overlap estimates}\label{overlapsec}
In working with conditions like ($QIA_\infty , \psi$) and ($QIA_p , \psi$) it is useful to have good bounds for the measures of the sets $A_m\cap A_n$ and $\E_m\cap\E_n$. Upper bounds of this type have been well studied (at least in $\R$) and they are referred to as {\em overlap estimates}. We will also be interested in lower estimates and it seems that a natural way to capture both bounds at the same time is by using results about arithmetic in the group ring of $\Q/\Z$. This appears in essence to be the approach used to obtain the overlap estimates in \cite{VaughanPollington}.

For each non-negative integer $n$ we can define a formal polynomial $F_n(z)$ by
\[F_n(z)=\sum_{\substack{a=1 \\ (a,n)=1}}^nz^{a/n\mod 1}.\]
The following result is proved in \cite{HaynesHomma}.
\begin{theorem}\cite[Theorem 2]{HaynesHomma}\label{ZGaddthm}
Suppose that $m$ and $n$ are positive integers. Let $d=(m,n)$ and
let $d'$ be the largest divisor of $d$ which is relatively prime
to both $m/d$ and $n/d$. Then we have that
\begin{align*}
F_m\times F_n=\varphi (d)\sum_{e|d'}c(d',e)F_{mn/de},
\end{align*}
where
\begin{equation*}
c(d',e)=\prod_{\substack{q|d',~ q\nmid e\\q\text{ prime}}}\left(1-\frac{1}{q-1}\right).
\end{equation*}
\end{theorem}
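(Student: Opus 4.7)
The plan is to compute the product $F_m F_n$ in the group algebra $\Z[\Q/\Z]$ by first introducing a companion polynomial with an easier multiplication rule, and then reducing the general identity to a prime-power computation.

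First I would introduce the auxiliary polynomial $G_n(z) = \sum_{a=0}^{n-1} z^{a/n \mod 1}$, the sum over \emph{all} residue classes mod $n$. Since every reduced fraction $b/k$ with $k \mid n$ appears exactly once among the values $a/n$, one has $G_n = \sum_{k \mid n} F_k$ and, by M\"obius inversion, $F_n = \sum_{k \mid n} \mu(n/k) G_k$. The point of the $G_n$ is the clean product rule
\[
G_m \cdot G_n = (m,n)\, G_{\lcm(m,n)},
\]
which follows because the map $(a,b) \mapsto a/m + b/n$ is a group homomorphism from $\Z/m\Z \oplus \Z/n\Z$ onto the cyclic subgroup $\langle 1/\lcm(m,n)\rangle$ of $\Q/\Z$, with fibres of size $(m,n)$.

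Second I would reduce to the prime-power case. Using the canonical isomorphism of $\Q/\Z$ with the direct sum of its $q$-primary parts (equivalently, the Chinese Remainder Theorem applied to denominators), the group-algebra element $F_n$ factors as $F_n = \prod_q F_{q^{e_q}}$, where $q^{e_q}$ exactly divides $n$. Both sides of the stated identity factor compatibly across primes: $\varphi(d)$ splits as a product over primes dividing $d$; the integer $d'$ involves only those primes $q$ at which $m$ and $n$ have the same $q$-adic valuation; and the sum $\sum_{e \mid d'} c(d',e) F_{mn/(de)}$ factors accordingly. This reduces the identity to the case $m = q^a$, $n = q^b$ for a single prime $q$, and by symmetry we may assume $a \le b$.

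Third I would verify the two prime-power subcases directly. If $a < b$ then $d = q^a$, $d' = 1$, so the right-hand side collapses to $\varphi(q^a) F_{q^b}$. For the left-hand side we rewrite $a'/q^a + b'/q^b = (a' q^{b-a} + b')/q^b$; since $a' q^{b-a} \equiv 0 \pmod{q}$ and $b'$ is a unit mod $q$, the numerator is already coprime to $q$, and for fixed unit $a'$ the translation $b' \mapsto a' q^{b-a} + b' \pmod{q^b}$ is a bijection of $(\Z/q^b\Z)^*$. The inner sum is therefore $F_{q^b}$, and summing over $a'$ gives $\varphi(q^a) F_{q^b}$. If $a = b$, one counts, for each $c \in \Z/q^a\Z$ with $q$-adic valuation $j$ (interpreting $v_q(0) = a$), the number of pairs of units $(a',b')$ with $a' + b' \equiv c$: when $j = 0$ the requirement that $b' = c - a'$ be a unit excludes the $q^{a-1}$ lifts of $c$ mod $q$, yielding $(q-2)q^{a-1}$ pairs; when $j \ge 1$ the element $c - a'$ is automatically a unit, yielding $\varphi(q^a)$ pairs. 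Collecting terms with $v_q(c) = j < a$ into $F_{q^{a-j}}$ and the term $c = 0$ into $F_1$, and checking that $c(q^a,1) = (q-2)/(q-1)$ while $c(q^a, q^j) = 1$ for $j \ge 1$, one matches the claimed right-hand side.

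The main obstacle is the multiplicativity reduction in step two: one needs not only that $F_n$ splits as a tensor product across primes in the group algebra, but also that the somewhat intricate combinatorial data on the right-hand side --- the factor $\varphi(d)$, the special divisor $d'$, and the weight $c(d',e)$ --- all assemble correctly from the local prime-power data. Once that bookkeeping is in hand the remainder of the argument is elementary counting.
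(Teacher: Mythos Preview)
The paper does not actually prove this theorem: it is quoted from \cite{HaynesHomma} and used as a black box throughout Section~\ref{overlapsec}. So there is no ``paper's own proof'' to compare against here.

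That said, your proposed argument is essentially correct and is close to the approach in the original reference. A few remarks. Your Step~1 (introducing $G_n$ and the relation $G_mG_n=(m,n)G_{\lcm(m,n)}$) is valid but is not used anywhere in Steps~2--3; the proof you actually carry out relies only on the primary decomposition of $\Q/\Z$ and the direct prime-power computations, so you could drop Step~1 without loss. The multiplicativity bookkeeping in Step~2 is the real content, and you have identified the key point: a prime $q$ divides $d'$ if and only if $v_q(m)=v_q(n)\ge 1$, and in that case $q^{v_q(m)}\,\|\,d'$; this is exactly what makes both sides factor over primes. Your two local computations in Step~3 are correct, including the edge case $q=2$, $a=b$, where $c(q^a,1)=0$ matches the vanishing count $(q-2)q^{a-1}=0$ of pairs of odd residues summing to an odd residue.
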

With the aid of this theorem we are able to relate the sizes of intersections of our sets in $\R/\Z$ to those in $\Z_p$ in the following way.
\begin{lemma}\label{Q_pRoverlaplem}
Suppose that $\psi$ takes values in the set $\{0,1,p^{-1},p^{-2},\ldots\}$ and further suppose that $\psi (n)<1/4n$ for all $n\in\N$. Then for all $m,n\in\N$ with $p\nmid m,n$ we have that
\begin{equation*}
\lambda (A_m(\psi/2)\cap A_n(\psi/2))\le \mu_p (\E_m(\psi)\cap\E_n(\psi))\le \frac{3}{2}\cdot\lambda (A_m(2\psi)\cap A_n(2\psi)).
\end{equation*}
\end{lemma}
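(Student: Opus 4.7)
The plan is to write both sides as explicit sums over pairs of integers and compare them via a $4$-to-$1$ correspondence of index sets. First, the strong triangle inequality in $\Q_p$ together with $\psi(n) < 1/(4n)$ forces the balls $B(a/n, \psi(n))$ for $a \in [-n, n]$ with $(a, n) = 1$ to be pairwise disjoint in $\Z_p$ (two such balls coincide only if $p^{t_n} \mid (a_1 - a_2)$ for $\psi(n) = p^{-t_n}$, but $|a_1 - a_2| \le 2n < p^{t_n}$); the same bound ensures that the intervals of $A_n(2\psi)$ are disjoint in $\R/\Z$. Both sides thus reduce to honest sums over pairs: setting $p^{-u} = \max(\psi(m), \psi(n))$,
\[
\mu_p(\E_m(\psi) \cap \E_n(\psi)) = \min(\psi(m), \psi(n)) \cdot N_p,
\]
where $N_p = \#\{(a, b) \in [-m, m] \times [-n, n] : (a, m) = (b, n) = 1,\ p^u \mid an - bm\}$, and $\lambda(A_m(\epsilon\psi) \cap A_n(\epsilon\psi)) = \sum_{(a', b')} \ell_\epsilon(a', b')$ over coprime $(a', b') \in [1, m] \times [1, n]$, where $\ell_\epsilon$ denotes the interval overlap length.

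Next I parametrize by $c = an - bm$, writing $M(c_0) = \#\{(a', b') \in [1, m] \times [1, n] : \text{coprime},\ a'n - b'm = c_0\}$. Since $\psi < 1/(4\cdot)$ prevents wraparound in $\R/\Z$ for the relevant $c_0$, and $\ell_\epsilon$ depends on $(a', b')$ only through $c_0$, we have $\lambda(A_m(\epsilon\psi) \cap A_n(\epsilon\psi)) = \sum_{c_0} M(c_0)\, \ell_\epsilon(c_0)$. The map $(a, b) \mapsto (a \bmod m, b \bmod n)$ from $[-m, m] \times [-n, n]$ onto $[1, m] \times [1, n]$ is essentially $4$-to-$1$, and the four preimages of a given $(a', b')$ contribute $c$-values $\{c_0, c_0, c_0 + mn, c_0 - mn\}$. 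This produces the decomposition
\[
N_p = 2 M_0 + M_+ + M_-, \quad M_0 = \sum_{p^u \mid c_0} M(c_0),\ M_\pm = \sum_{c_0 \equiv \pm mn \pmod{p^u}} M(c_0).
\]
The hypothesis $p^u > 4\min(m, n)$ then enables the comparison: for the left inequality, every $c_0$ contributing to $\lambda(A_m(\psi/2) \cap A_n(\psi/2))$ satisfies $|c_0| \le mn(\psi(m) + \psi(n))/2 < mn p^{-u}$, so it lies in at most the three residue classes $\{0, \pm mn\} \pmod{p^u}$ featured in $N_p$; the coefficient $2$ in front of $M_0$, together with $\max(\psi(m), \psi(n)) \ge (\psi(m) + \psi(n))/2$, supplies the slack. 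For the right inequality, $\lambda(A_m(2\psi) \cap A_n(2\psi))$ is bounded from below by $2\min(\psi(m), \psi(n)) \cdot N_r(z)$ evaluated at an interior point of the integration range (with $N_r(z) = \sum_{|c_0| \le mnz} M(c_0)$), while each of $M_0, M_\pm$ is bounded above by a real count $N_r(z)$ in a window of comparable length; the factor $3/2$ absorbs the three-term overcount and boundary effects.

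The main obstacle is the careful bookkeeping of coprimality when relating $M_0$ and $M_\pm$ to the real counts $N_r(z)$ with the correct quasi-equidistribution of residues $c_0 \pmod{p^u}$ among the coprime pairs. M\"obius inversion (in the spirit of its use in the proof of Lemma \ref{largepsilemma}) is the natural tool here; the strong hypothesis $\psi < 1/(4\cdot)$ provides sufficient slack that the resulting error terms are absorbed by the constants $1/2$ and $3/2$ appearing in the statement.
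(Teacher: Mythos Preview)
Your setup matches the paper's in spirit: the disjointness observation, the reduction to counting pairs $(a,b)$ via $c=an-bm$, and the $4$-to-$1$ unfolding of $[-m,m]\times[-n,n]$ onto $[1,m]\times[1,n]$ giving $N_p=2M_0+M_++M_-$ are essentially the paper's treatment of the quantities it calls $N_1,N_2,N_3,N_4$. The divergence is in how the comparison between the Archimedean count and the $p$-adic count is actually carried out.

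The paper does \emph{not} try to compare $\sum_{|c_0|\text{ small}}M(c_0)$ with $\sum_{c_0\equiv r\bmod p^u}M(c_0)$ directly. Instead it invokes Theorem~\ref{ZGaddthm}, the group-ring identity $F_m\times F_n=\varphi(d)\sum_{e\mid d'}c(d',e)F_{mn/de}$, to rewrite \emph{both} the real overlap and the $p$-adic overlap as the same weighted sum of counts $\#\{a\le X:(a,mn/de)=1\}$, with $X$ differing only by bounded multiplicative factors. Once both sides are expressed in this common form the inequalities are immediate. This structural identity is doing the real work: it converts ``$\|\gamma\|\le\Delta$'' and ``$|\gamma|_p\le\Delta$'' into the \emph{same} kind of interval count, which is why the constants $1$ and $3/2$ come out cleanly.

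Your argument, by contrast, has a genuine gap at the comparison step. The assertion that every $c_0$ with $|c_0|<mnp^{-u}$ ``lies in at most the three residue classes $\{0,\pm mn\}\pmod{p^u}$'' is simply false: Archimedean smallness of $c_0$ says nothing about its residue modulo $p^u$. For a concrete counterexample take $p=3$, $m=4$, $n=25$, $\psi(4)=3^{-3}$, $\psi(25)=3^{-5}$, so $p^u=27$; then $c_0=1$ arises from $(a',b')=(1,6)$, contributes nontrivially to $\lambda(A_4(\psi/2)\cap A_{25}(\psi/2))$, yet $1\not\equiv 0,\pm 100\pmod{27}$. The sets $\{c_0:|c_0|<mnp^{-u}\}$ and $\{c_0:c_0\equiv 0,\pm mn\bmod p^u\}$ have comparable \emph{cardinality}, but they are different sets, and $M(c_0)$ is not constant in $c_0$, so no containment argument can work. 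Your final paragraph seems to acknowledge this by retreating to ``quasi-equidistribution of residues $\ldots$ via M\"obius inversion,'' but that is precisely the nontrivial content you have not supplied; establishing it with error terms small enough to preserve the exact constants $1$ and $3/2$ would amount to re-deriving the substance of Theorem~\ref{ZGaddthm}. As written, the proof is incomplete.
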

\begin{proof}
The condition that $\psi (n)<1/4n$ guarantees that the intervals used in the definitions of $A_n(\psi/2)$ and $A_n(2\psi)$ are disjoint. Similarly if $p\nmid n$ then suppose that $a$ and $b$ are two positive integers which are both relatively prime to $n$ and for which
\[\left|\frac{a}{n}-\frac{b}{n}\right|_p\le\psi (n).\]
Writing $\psi (n)=p^{-N}$ we see that the above equation is satisfied if and only if
$p^N| (a-b)$. Since \[p^N=\psi (n)^{-1}>4n\] it follows that at most one of the integers $a$ and $b$ can lie in $[-n,n]$. Finally if two balls in $\Q_p$ intersect each other then one of them must be contained in the other (a fact which we will use from here on without reference), so this argument proves that the balls used in the definition of $\E_n(\psi)$ are disjoint.

If $m=n$ then the statement of the lemma is easily verified. Therefore let us assume that $m\not= n$, $\psi (m)\not=0,$ and $\psi (n)\not=0$, and set
\begin{align*}
\delta=\delta(m,n)=\min\left\{\psi (m),\psi (n)\right\}~\text{ and }~
\Delta=\Delta(m,n)=\max\left\{\psi (m),\psi (n)\right\}.
\end{align*}
Then we find immediately that
\begin{align*}
\lambda (A_m(2\psi)\cap A_n(2\psi))&\ge 2\delta\cdot\#\Big\{a,b\in\N : a\le m,~ b\le n, \\ &\qquad\qquad\qquad\qquad\qquad (a,m)=(b,n)=1,~ \left\|\frac{a}{m}-\frac{b}{n}\right\|\le2\Delta\Big\},\nonumber\\
\intertext{and that}\lambda (A_m(\psi/2)\cap A_n(\psi/2))&\le \delta\cdot\#\Big\{a,b\in\N : a\le m, ~b\le n,\\
&\qquad\qquad\qquad\qquad\qquad (a,m)=(b,n)=1,~ \left\|\frac{a}{m}-\frac{b}{n}\right\|\le\Delta\Big\}.\nonumber
\end{align*}
The quantity
\[\#\left\{a,b\in\N : a\le m,~ b\le n, ~(a,m)=(b,n)=1,~ \left\|\frac{a}{m}-\frac{b}{n}\right\|\le\Delta\right\}\]
is equal to the number of monomials $z^{\gamma\mod 1}$ which appear in the product $F_m\times F_n$ and which also satisfy $\|\gamma\|\le\Delta$. Of course these are counted with multiplicity. This observation together with Theorem \ref{ZGaddthm} gives us the bounds
\begin{align}
\lambda (A_m(2\psi)\cap A_n(2\psi))&\ge 4\delta\varphi (d)\sum_{e|d'}c(d',e)\#\left\{a\in\N :a\le\frac{2mn\Delta}{de}, \left(a,\frac{mn}{de}\right)=1\right\}\label{Roverlap1}\\
\intertext{and}
\lambda (A_m(\psi/2)\cap A_n(\psi/2))&\le 2\delta\varphi (d)\sum_{e|d'}c(d',e)\#\left\{a\in\N :a\le\frac{mn\Delta}{de}, \left(a,\frac{mn}{de}\right)=1\right\}.\label{Roverlap2}
\end{align}
For the analysis of the $p-$adic case we start with the equality
\begin{align*}
\mu_p(\E_m(\psi)&\cap \E_n(\psi))\nonumber\\
&= \delta\cdot\#\left\{a,b\in\Z : |a|\le m, |b|\le n,(a,m)=(b,n)=1, \left|\frac{a}{m}-\frac{b}{n}\right|_p\le\Delta\right\}.
\end{align*}
Our aim is to apply Theorem \ref{ZGaddthm} to estimate the right hand side from above and below. However the problem is that the exponents of the monomials in Theorem \ref{ZGaddthm} are only determined modulo one, but the absolute value $|~|_p$ is not invariant under integer translation. This means that we have to be careful to make the correct choice of representatives. We do this by setting
\begin{align*}
N_1&=\#\left\{a,b\in\Z : 1\le a\le m, 1\le b\le n,(a,m)=(b,n)=1, \left|\frac{a}{m}-\frac{b}{n}\right|_p\le\Delta\right\},\\
N_2&=\#\left\{a,b\in\Z : -m\le a\le -1, -n\le b\le -1,(a,m)=(b,n)=1, \left|\frac{a}{m}-\frac{b}{n}\right|_p\le\Delta\right\},\\
N_3&=\#\left\{a,b\in\Z : 1\le a\le m, -n\le b\le -1,(a,m)=(b,n)=1, \left|\frac{a}{m}-\frac{b}{n}\right|_p\le\Delta\right\},\\
\intertext{ and}
N_4&=\#\left\{a,b\in\Z : -m\le a\le -1, 1\le b\le n,(a,m)=(b,n)=1, \left|\frac{a}{m}-\frac{b}{n}\right|_p\le\Delta\right\}.
\end{align*}
Since $|x|_p=|-x|_p$ for all $x\in\Q_p$ it is immediate that $N_1=N_2$ and $N_3=N_4$ and thus
\[\mu_p(\E_m(\psi)\cap \E_n(\psi))=2\delta(N_1+N_3).\]
We also have that
\begin{align}
N_3&=\#\left\{a,b\in\Z : 1\le a\le m, 1\le b\le n,(a,m)=(b,n)=1, \left|\frac{a}{m}+\frac{b}{n}\right|_p\le\Delta\right\}\nonumber\\
&=\#\left\{a,b\in\Z : 1\le a\le m, 1\le b\le n,(a,m)=(b,n)=1, \left|\frac{a}{m}-\frac{b}{n}+1\right|_p\le\Delta\right\},\label{Q_poverlap6}
\end{align}
where for the second equality we have used the bijection $b/n\leftrightarrow (n-b)/n.$ Now the quantities $a/m-b/n$ which are being counted in $N_1$ all lie in the interval $(-1,1)$ on the real line. By replacing $a/m$ and $b/n$ by $(m-a)/m$ and $(n-b)/n$ we see that there is a symmetry about zero in the range of values. This leads to the inequalities
\begin{eqnarray}
N_1\le &2\cdot\#\Big\{a,b\in\Z : 1\le a\le m, 1\le b\le n, &(a,m)=(b,n)=1,\label{Q_poverlap4}\\
 & &\left|\frac{a}{m}-\frac{b}{n}\right|_p\le\Delta, a/m>b/n\Big\},~\text{ and }\nonumber\\
N_1+N_3\ge &\#\Big\{a,b\in\Z : 1\le a\le m, 1\le b\le n, &(a,m)=(b,n)=1,\nonumber\\
 & &\left|\frac{a}{m}-\frac{b}{n}\right|_p\le\Delta, \frac{a}{m}>\frac{b}{n}\Big\}\label{Q_poverlap5}\\
&+~\#\Big\{a,b\in\Z : 1\le a\le m, 1\le b\le n, &(a,m)=(b,n)=1, \nonumber\\
 & &\left|\frac{a}{m}-\frac{b}{n}+1\right|_p\le\Delta, \frac{a}{m}<\frac{b}{n}\Big\}.\nonumber
\end{eqnarray}
Of course in (\ref{Q_poverlap5}) if $a/m>b/n$ then $a/m-b/n\in (0,1)$ while if $a/m<b/n$ then $a/m-b/n+1\in (0,1)$. Thus for a lower bound on $N_1+N_3$ we may use Theorem \ref{ZGaddthm} to consider all combinations $a/m-b/n$ at once, by choosing our representatives $\gamma$ from $(0,1)$ and counting how many have $|\gamma|_p\le\Delta.$ In this way we find that
\begin{align*}
\mu_p(\E_m(\psi)\cap \E_n(\psi))\ge 2\delta\varphi (d)\sum_{e|d'}c(d',e)\#\left\{a\in\N :a\le\frac{mn}{de}, \left(a,\frac{mn}{de}\right)=1, |a|_p\le\Delta\right\}.
\end{align*}
If we write $\Delta=p^{-N}$ then we have that $|a|_p\le\Delta$ if and only if $p^N|a$, and so
\begin{align*}
\mu_p(\E_m(\psi)\cap \E_n(\psi))&\ge 2\delta\varphi (d)\sum_{e|d'}c(d',e)\#\left\{a\in\N :a\le\frac{mn\Delta}{de}, \left(a,\frac{mn}{de}\right)=1\right\}.
\end{align*}
Connecting this with (\ref{Roverlap2}) proves the left hand inequality in the statement of the lemma.

For the other inequality notice that the quantities $a/m-b/n+1$ which appear in (\ref{Q_poverlap6}) all lie in the interval $(0,2)$. Thus for an upper bound we may use Theorem \ref{ZGaddthm} as before by choosing our representatives $\gamma$ from $(0,1)$ and counting how many have $|\gamma|_p\le\Delta$ or $|\gamma+1|_p\le\Delta$. Overestimating (\ref{Q_poverlap4}) in the same way leads to the bound
\begin{align}
\mu_p(\E_m(\psi)\cap \E_n(\psi))&\le 6\delta\varphi (d)\sum_{e|d'}c(d',e)\#\left\{a\in\N :a\le\frac{2mn}{de}, \left(a,\frac{mn}{de}\right)=1, |a|_p\le\Delta\right\}\nonumber\\
&\le 6\delta\varphi (d)\sum_{e|d'}c(d',e)\#\left\{a\in\N :a\le\frac{2mn\Delta}{de}, \left(a,\frac{mn}{de}\right)=1\right\},\label{Q_poverlap1}
\end{align}
and connecting this with (\ref{Roverlap1}) finishes the proof of the lemma.
\end{proof}
The deficiency of Lemma \ref{Q_pRoverlaplem} is that it is only applicable when $\psi (n)<1/4n$. However the following result will be sufficient to dispatch with the case of larger $\psi$.
\begin{lemma}\label{mediumpsilemma}
Suppose that $\psi$ takes values in the set $\{0,1,p^{-1},p^{-2},\ldots\}$. If $n\in\N$, $p\nmid n$, and $1/n\le\psi (n)< 1/12\varphi (n)$ then
\[\mu_p (\E_n(\psi))\ge \varphi (n)\psi (n).\]
Furthermore if $m\in\N$, $p\nmid m,$ and $1/m\le \psi (m)<1/12\varphi (m)$ then
\[\mu_p(\E_m(\psi)\cap\E_n (\psi))\ll \mu_p(\E_m(\psi))\cdot\mu_p(\E_n(\psi)),\]
and the implied constant is universal.
\end{lemma}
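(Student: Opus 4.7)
The proof of both parts rests on the following ultrametric observation. Writing $\psi(n)=p^{-N}$ with $12\varphi(n)<p^N\le n$ and setting
\[V_n=\{a\in\Z:|a|\le n,\ (a,n)=1\},\]
each ball $B(a/n,p^{-N})$ with $a\in V_n$ has Haar measure $p^{-N}$, and since $p\nmid n$ two such balls coincide iff $a\equiv b\pmod{p^N}$. With $R(c)=\#\{a\in V_n:a\equiv c\pmod{p^N}\}$ and $T_n=\#\{c:R(c)\ge 1\}$, this yields
\[\mu_p(\E_n(\psi))=T_n\,\psi(n),\]
and since $|V_n|=2\varphi(n)$ the statement (i) is equivalent to the combinatorial inequality $T_n\ge\varphi(n)$.

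For part (i), I would apply Cauchy--Schwarz to $f=\sum_{a\in V_n}\mathbf{1}_{B(a/n,\psi(n))}$: one obtains $\mu_p(\E_n)\ge(\int f)^2/\int f^2=|V_n|^2\psi(n)/\sum_c R(c)^2$, so it suffices to prove $\sum_c R(c)^2\le 4\varphi(n)$. Now $\sum_c R(c)^2$ counts ordered pairs $(a,b)\in V_n^2$ with $p^N\mid(a-b)$; the diagonal contributes $|V_n|=2\varphi(n)$, so the task reduces to bounding the off-diagonal count
\[P=\sum_{k\ne 0,\ |k|p^N\le 2n}\#\{a\in V_n:a+kp^N\in V_n\}\le 2\varphi(n).\]
A standard M\"obius sieve applied to the joint condition $(a,n)=(a+kp^N,n)=1$ expresses the inner count as $(2n-|k|p^N)\rho(k,n)+O(2^{\omega(n)})$, where the local density
\[\rho(k,n)=\prod_{q\mid n,\,q\mid k}\tfrac{q-1}{q}\prod_{q\mid n,\,q\nmid k}\tfrac{q-2}{q}\]
(truncated at $0$) has mean value $(\varphi(n)/n)^2$ in $k$. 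Summing gives a main term $|V_n|^2/p^N=4\varphi(n)^2/p^N$, which the hypothesis $p^N>12\varphi(n)$ forces to be strictly less than $\varphi(n)/3$. The main obstacle is then to show that the accumulated M\"obius errors, summed over the $O(n/p^N)$ relevant values of $k$, contribute at most a further $5\varphi(n)/3$; here the ample slack left between $\varphi(n)/3$ and $2\varphi(n)$, together with the cancellation coming from the vanishing factor $(1-2/q)$ at $q=2$ whenever $2\mid n$ and $2\nmid k$, makes the bound tractable.

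For part (ii), I would adapt the proof of Lemma~\ref{Q_pRoverlaplem} to the ``medium'' regime in which the balls inside a single $\E_n$ need no longer be disjoint. Assuming without loss of generality $\psi(n)\le\psi(m)$, the ultrametric structure gives
\[\mu_p(\E_m\cap\E_n)\le\psi(n)\cdot\#\{(a,b)\in V_m\times V_n:|a/m-b/n|_p\le\psi(m)\},\]
and splitting the pair count by the signs of $a,b$ into the four quantities $N_1,\dots,N_4$ exactly as in the proof of Lemma~\ref{Q_pRoverlaplem} allows Theorem~\ref{ZGaddthm} (applied to $F_m\times F_n$) to bound the right-hand side by $O(\varphi(m)\varphi(n)\psi(m)\psi(n))$ with a universal implied constant. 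Combining with the lower bound $\mu_p(\E_m)\mu_p(\E_n)\ge\varphi(m)\varphi(n)\psi(m)\psi(n)$ furnished by part (i) yields the desired quasi-independence $\mu_p(\E_m\cap\E_n)\ll\mu_p(\E_m)\mu_p(\E_n)$.
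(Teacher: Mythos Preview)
Your proposal is correct and follows essentially the same strategy as the paper for both parts, but there are two places where the paper's execution is cleaner and worth noting.

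\textbf{Part (i).} Both you and the paper reduce to bounding the off-diagonal pair count $P_+=\#\{(a,a')\in V_n^2:a'>a,\ p^N\mid(a'-a)\}$ by $\varphi(n)$. Your Cauchy--Schwarz opening and the paper's direct identity $\mu_p(\E_n)=\psi(n)\bigl(2\varphi(n)-A(n)\bigr)$ with $A(n)\le P_+$ are equivalent here. The real difference is in the M\"obius calculation: you split into a main term plus accumulated errors and then argue that ``the ample slack \dots\ makes the bound tractable'', which is vague and would require care. The paper instead evaluates the inner sum \emph{exactly} as a product of local factors,
\[
\sum_{\substack{|a|\le n\\(a(a+bp^N),n)=1}}1
= 2\varphi(n)\prod_{\substack{q\mid n\\ q\nmid b}}\Bigl(1-\tfrac{1}{q-1}\Bigr),
\]
and only then applies the pointwise inequality $\prod_{q\mid n,\,q\nmid b}(1-\tfrac{1}{q-1})\le\frac{\varphi(n)}{n}\cdot\frac{b}{\varphi(b)}$ together with the elementary estimate $\sum_{b\le x}b/\varphi(b)\ll x$. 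This completely sidesteps the error-term bookkeeping you flag as ``the main obstacle''.

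\textbf{Part (ii).} Your outline matches the paper's: start from the inequality $\mu_p(\E_m\cap\E_n)\le\delta\cdot\#\{(a,b):|a/m-b/n|_p\le\Delta\}$, which (unlike Lemma~\ref{Q_pRoverlaplem}) does not require disjointness, decompose via the $N_i$, and apply Theorem~\ref{ZGaddthm}. However, Theorem~\ref{ZGaddthm} alone does \emph{not} immediately give the bound $O(\varphi(m)\varphi(n)\psi(m)\psi(n))$; the paper explicitly invokes the Pollington--Vaughan overlap lemma (the Lemma on p.~196 of \cite{VaughanPollington}) at this point to conclude that the resulting count is $\ll\lambda(A_m(\psi))\lambda(A_n(\psi))$. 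You should acknowledge that external input rather than present it as a direct consequence of the group-ring identity.
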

\begin{proof}
Write $\psi (n)=p^{-N}$. For the first part of the lemma we begin with the formula
\begin{align}
\mu_p (\E_n)&=\psi (n)\left(2\varphi (n)-A(n)\right),\label{mediumpsieqn1}\\
\intertext{where}
A(n)&=\sum_{\ell=1}^{p^N}\max\left\{0,-1+\sum_{\substack{a=-n\\(a,n)=1\\a\equiv\ell\mod p^N}}^n1\right\}.\nonumber
\end{align}
For an upper bound on $A(n)$ we have
\begin{align*}
A(n)&\le \sum_{1\le b\le 2n/p^N}\sum_{\substack{a=-n\\(a(a+bp^N),n)=1}}^n1\\
&=\sum_{1\le b\le 2n/p^N}\sum_{\substack{a=-n\\(a,n)=1}}^n\sum_{d|a+bp^N,n}\mu (d)\\
&=\sum_{1\le b\le 2n/p^N}\sum_{\substack{d|n\\(d,bp^N)=1}}\mu (d)\sum_{\substack{a=-n\\(a,n)=1\\a\equiv -bp^N\mod d}}^n 1\\
&=\sum_{1\le b\le 2n/p^N}\sum_{\substack{d|n\\(d,bp^N)=1}}\mu (d)\sum_{\substack{e|n\\(e,d)=1}}\mu (e)\sum_{\substack{c=-n/e\\ec\equiv -bp^N\mod d}}^{n/e}1\\
&=\sum_{1\le b\le 2n/p^N}\sum_{\substack{d|n\\(d,bp^N)=1}}\mu (d)\sum_{\substack{e|n\\(e,d)=1}}\frac{2n\mu (e)}{de}\\
&=2n\sum_{1\le b\le 2n/p^N}\sum_{\substack{d|n\\(d,bp^N)=1}}\frac{\mu (d)}{d}\sum_{\substack{e|n\\(e,d)=1}}\frac{\mu (e)}{e}\\
&=2\varphi (n)\sum_{1\le b\le 2n/p^N}\sum_{\substack{d|n\\(d,bp^N)=1}}\frac{\mu (d)}{d}\prod_{q|d}\left(1-\frac{1}{q}\right)^{-1}\\
&=2\varphi (n)\sum_{1\le b\le 2n/p^N}\prod_{\substack{q|n\\q\nmid bp^N}}\left(1-\frac{1}{q-1}\right)\\
&\le \frac{2\varphi (n)^2}{n}\sum_{1\le b\le 2n/p^N}\frac{b}{\varphi (b)}\\
&< \frac{12\varphi (n)^2}{p^N}.
\end{align*}
For the penultimate inequality we have used the fact that if $q|n$ then $q\nmid bp^N$ if and only if $q\nmid b$. The final inequality is an elementary result in number theory (see \cite[Lemma 2.5]{HarmanMNT}). Now since $\psi (n)<1/12\varphi (n)$ we have that $A(n)<\varphi (n)$ and the first part of the lemma follows from (\ref{mediumpsieqn1}).

For the proof of the second part of the lemma we begin by noticing that inequality (\ref{Q_poverlap1}) holds even when $\psi (n)\ge 1/4n$. This can be seen by starting from the inequality
\begin{align*}
\mu_p(\E_m(\psi)&\cap \E_n(\psi))\nonumber\\
&\le \delta\cdot\#\left\{a,b\in\Z : |a|\le m, |b|\le n,(a,m)=(b,n)=1, \left|\frac{a}{m}-\frac{b}{n}\right|_p\le\Delta\right\}
\end{align*}
and then proceeding in exactly the same way as in the proof of Lemma \ref{Q_pRoverlaplem}. By Theorem \ref{ZGaddthm} the quantity on the right hand side of (\ref{Q_poverlap1}) is equal to
\[3\delta\cdot\#\Big\{a,b\in\N : a\le m,~ b\le n, ~(a,m)=(b,n)=1,~ \left\|\frac{a}{m}-\frac{b}{n}\right\|\le2\Delta\Big\}.\]
The proof of the Lemma on p.196 of \cite{VaughanPollington} (also see the comments at the beginning of \cite[Section 4]{VaughanPollington}) shows that this is
\[\ll \lambda (A_m(\psi))\cdot\lambda(A_n (\psi)),\] and it follows from the first part of this lemma together with \cite[Equation (3)]{VaughanPollington} that
\[\lambda (A_m(\psi))\cdot\lambda(A_n (\psi))\ll \mu_p(\E_m(\psi))\cdot\mu_p(\E_n (\psi)).\]
\end{proof}

\section{Proofs of Theorems \ref{Q_pRtransferthm} and \ref{RQ_ptransferthm}}
We will now demonstrate the proofs of Theorems \ref{Q_pRtransferthm} and \ref{RQ_ptransferthm}. For the proof of Theorem \ref{Q_pRtransferthm} we first establish the following lemma.
\begin{lemma}\label{Q_pRtransferlem}
For any fixed prime $p$ if ($QIA_p$) holds then ($QIA_\infty ,\psi$) holds for any $\psi$ which satisfies (\ref{DSdivcond1}) and $\psi (n)=0$ for all $n\in p\N$.
\end{lemma}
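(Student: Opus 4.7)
The plan is to pass from $\psi$ to a related arithmetical function $\psi_0$ taking values in the discrete set $\{0,1,p^{-1},p^{-2},\ldots\}$, apply the hypothesis $(QIA_p)$ to $\psi_0$ in the $p$-adic setting, and then translate the resulting inequality back to $\R$ using the overlap estimates from Section \ref{overlapsec}. Concretely, I would set $\psi_0(n)=0$ when $\psi(n)=0$, and otherwise let $\psi_0(n)=p^{-k}$ where $k$ is the smallest non-negative integer with $p^{-k}\ge 2\psi(n)$, so that $2\psi(n) \le \psi_0(n) \le 2p\psi(n)$. The assumption that $\psi(n)=0$ on $p\N$ is inherited by $\psi_0$, which is exactly what makes the hypothesis of Lemma \ref{Q_pRoverlaplem} available for $m,n$ in the support.

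Next I would verify the two ingredients needed for the transfer: first, that $\sum_n \mu_p(\E_n(\psi_0))=\infty$, so that $(QIA_p)$ applies to $\psi_0$; and second, that the termwise bounds $\mu_p(\E_n(\psi_0)) \ll \lambda(A_n(\psi))$ and $\mu_p(\E_m(\psi_0)\cap \E_n(\psi_0)) \gg \lambda(A_m(\psi)\cap A_n(\psi))$ hold with constants depending only on $p$. Both follow from a case analysis based on the size of $\psi(n)$. In the small range $\psi(n)<1/(8pn)$ we have $\psi_0(n)<1/(4n)$, so Lemma \ref{Q_pRoverlaplem} applied to $\psi_0$ gives the overlap bound directly; meanwhile a short direct argument (since $p^k>2n$ in this regime) shows that the $2\varphi(n)$ balls defining $\E_n(\psi_0)$ are pairwise disjoint, yielding $\mu_p(\E_n(\psi_0))=2\varphi(n)\psi_0(n)\asymp \lambda(A_n(\psi))$. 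In the complementary regime, Lemma \ref{mediumpsilemma} handles both the lower bound on $\mu_p(\E_n(\psi_0))$ and the corresponding overlap estimate; the observation in the proof of Lemma \ref{mediumpsilemma} that inequality \eqref{Q_poverlap1} remains valid without the restriction $\psi(n)<1/(4n)$ is crucial here, while Lemma \ref{largepsilemma} absorbs the very largest values of $\psi_0(n)$. Summing the termwise comparison gives $\sum_n \mu_p(\E_n(\psi_0))\gg\sum_n \lambda(A_n(\psi))=\infty$.

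With these in hand, $(QIA_p)$ furnishes a sequence $N_j\to\infty$ along which $\bigl(\sum_{n\le N_j}\mu_p(\E_n(\psi_0))\bigr)^2 \gg \sum_{m,n\le N_j}\mu_p(\E_m(\psi_0)\cap \E_n(\psi_0))$. Chaining in the termwise bounds from the previous paragraph, together with the monotonicity $\psi_0/2\ge\psi$ (which in the small range converts the $\psi_0/2$ appearing on the left of Lemma \ref{Q_pRoverlaplem} into the desired $\psi$ via $A_n(\psi)\subseteq A_n(\psi_0/2)$), transforms the displayed inequality into the same statement with $\mu_p$ replaced by $\lambda$ and $\psi_0$ replaced by $\psi$, which is precisely $(QIA_\infty,\psi)$.

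The principal difficulty is the regime $\psi(n)\gtrsim 1/(8pn)$, where the hypothesis of Lemma \ref{Q_pRoverlaplem} fails and one must rely on Lemma \ref{mediumpsilemma} (supplemented by Lemma \ref{largepsilemma}) to produce overlap bounds in the form required. Ensuring that the comparison constants for $\mu_p(\E_n(\psi_0))$ versus $\lambda(A_n(\psi))$, and for the corresponding pairwise overlaps, stay bounded uniformly as $\psi(n)$ moves between these size regimes is the most delicate bookkeeping step of the proof.
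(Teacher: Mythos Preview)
Your treatment of the small range $\psi(n)<1/(8pn)$ is essentially the paper's argument: once $\psi_0$ lies in $\{0,1,p^{-1},\ldots\}$ with $\psi_0(n)<1/(4n)$, Lemma~\ref{Q_pRoverlaplem} gives precisely the two-sided comparison you need, and the chaining you describe is correct there.

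The large-$\psi$ regime, however, is where your proposal diverges from the paper, and where it has a genuine gap. To pass from $(QIA_p,\psi_0)$ to $(QIA_\infty,\psi)$ you need a \emph{lower} bound
\[
\mu_p(\E_m(\psi_0)\cap\E_n(\psi_0))\ \gg\ \lambda(A_m(\psi)\cap A_n(\psi)),
\]
but Lemma~\ref{mediumpsilemma} (and the observation that \eqref{Q_poverlap1} persists without the size restriction) only gives an \emph{upper} bound $\mu_p(\E_m\cap\E_n)\ll\mu_p(\E_m)\mu_p(\E_n)$. That inequality points the wrong way for this direction of transfer; it is what one uses going from $(QIA_\infty)$ to $(QIA_p)$, as in Theorem~\ref{RQ_ptransferthm}. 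The mixed pairs (one index in each regime) are similarly not covered. Moreover, the hypothesis $1/n\le\psi_0(n)<1/(12\varphi(n))$ of Lemma~\ref{mediumpsilemma} can be vacuous for integers with $\varphi(n)$ close to $n$, so even the single-set estimate is not available throughout your complementary regime.

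The paper avoids all of this by a cleaner reduction: it invokes the known fact (from \cite{VaughanPollington}) that $(QIA_\infty,\psi)$ already holds whenever $\sum_{\psi(n)>c/n}\lambda(A_n)=\infty$ for some $c>0$, so one may assume $\psi(n)<1/(4n)$ for every $n$ from the outset. After this reduction only the small range remains, Lemma~\ref{Q_pRoverlaplem} applies uniformly, and no case analysis or mixed-regime bookkeeping is needed.
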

\begin{proof}
The condition ($QIA_\infty, \psi$) is already known to hold in all cases when
\[\sum_{\substack{n\in\N\\\psi(n)>c/n}}\lambda (A_n)=\infty\]
for any positive constant $c$ (see \cite{VaughanPollington}). Therefore we may assume that $\psi$ satisfies $\psi (n)<1/4n$ for all $n\in\N$. Furthermore by contracting the component intervals of each of the sets $A_n(\psi/2)$ by a factor of at most $1/p$ we can arrange for $\psi$ to take values in the set $\{0,1,p^{-1},p^{-2},\ldots\}$. By the lemma of Cassels mentioned in the proof of Lemma \ref{zeroonelem} this contraction will not affect the measure of the set $W_\infty(\psi)$, and it will certainly not affect the divergence of (\ref{DSdivcond1}).

For the measures of the sets $A_n$ and $\E_n$ we have that
\[2\cdot\lambda (A_n(\psi/2))=\mu_p (\E_n(\psi))=2\varphi (n)\psi (n),\]
and by Lemma \ref{Q_pRoverlaplem} we then have that
\[\limsup_{N\rar\infty}\frac{\left(\sum_{n\le N}
    \lambda(A_n(\psi/2))\right)^2}{\sum_{m,n\le N}
    \lambda(A_m(\psi/2)\cap A_n(\psi/2))}\gg\limsup_{N\rar\infty}\frac{\left(\sum_{n\le N}
    \mu_p(\E_n(\psi))\right)^2}{\sum_{m,n\le N}
    \mu_p(\E_m(\psi)\cap\E_n(\psi))}>0.\]
Since this is true for all functions $\psi$ satisfying our hypotheses, we are finished.
\end{proof}
With this lemma as a stepping stone we give the proof of Theorem \ref{Q_pRtransferthm}.
\begin{proof}[Proof of Theorem \ref{Q_pRtransferthm}]
Suppose that $\psi$ is a function which satisfies the hypothesis of Theorem \ref{Q_pRtransferthm}, with $\psi (n)<1/8n$ for all $n\in\N$, and for which (\ref{DSdivcond1}) holds. Define a map $\tau:S\rar(\N\setminus p\N)$ by writing each integer $n\in S$ as $n=p^\ell m$ with $p\nmid m$ and then setting $\tau (n)=m$. Writing $S'=\tau (S)$ it follows that each integer in $m\in S'$ has at most $N+1$ preimages in $S$, the possibilities being the elements of the set $\{m,pm,p^2m,\ldots ,p^Nm\}$. We can thus choose a (not necessarily unique) non-negative integer $k\le N$ for which
\begin{equation}\label{DSdivcond2}
\sum_{m\in S'}\lambda (A_{p^km}(\psi))=\infty,
\end{equation}
and we may assume that $k>0$ since otherwise the conclusion of the theorem follows directly from Lemma \ref{Q_pRtransferlem}. Now consider the function $\psi':\N\rar\R$ defined by \[\psi'(n)=\begin{cases} \psi (p^kn)&\text{ if } n\in S',\\0 &\text{ if } n\notin S'.\end{cases}\]
First of all we have for $n\in S'$ that
\begin{align}\label{DSdivcond3}
\lambda (A_n(2p^k\psi'))=4\varphi (n)p^k\psi (p^kn)=\frac{2p^k}{\varphi (p^k)}\cdot\lambda (A_{p^kn}(\psi)),
\end{align}
and (\ref{DSdivcond2}) thus guarantees the divergence of the sum
\[\sum_{n\in\N}\lambda (A_n(2p^k\psi')).\]
Therefore by Lemma \ref{Q_pRtransferlem} we know that ($QIA_\infty , 2p^k\psi'$) holds. To finish our proof we will show that as $N\rar\infty$,
\begin{equation}\label{DSlimsupeqn1}
\frac{\left(\sum_{\substack{n\in S'\\n\le N}}
    \lambda(A_{p^kn}(\psi))\right)^2}{\sum_{\substack{m,n\in S'\\m,n\le N}}
    \lambda(A_{p^km}(\psi)\cap A_{p^kn}(\psi))}\gg_{p,k} \frac{\left(\sum_{\substack{n\in S'\\n\le N}}
    \lambda(A_n(2p^k\psi'))\right)^2}{\sum_{\substack{m,n\in S'\\m,n\le N}}
    \lambda(A_m(2p^k\psi')\cap A_n(2p^k\psi'))}.
\end{equation}
This implies quasi-independence on average in the limit as $N\rar\infty$ for the sequence of sets $\{A_{p^kn}(\psi):n\in S'\}$. In light of (\ref{DSdivcond2}) and our zero-one law it will follow that $\lambda(W_\infty(\psi))=1$.

To prove (\ref{DSlimsupeqn1}) we will again make use of the group ring identities from Theorem \ref{ZGaddthm}. Suppose that $m,n\in S'$ and write
\begin{align*}
m_0=p^km, ~n_0=p^kn, ~d=(m,n), ~\text{ and }~d_0=(m_0,n_0).
\end{align*}
Also let $d'$ and $d_0'$ be the divisors of $d$ and $d_0$ which satisfy the relevant hypothesis of Theorem \ref{ZGaddthm}. Observe that $d_0=p^kd$ and that $d_0'=p^kd'.$ Finally let
\begin{align*}
\delta =\delta(m,n)&=\min\left\{\psi' (m),\psi' (n)\right\}=\min\left\{\psi (m_0),\psi (n_0)\right\}\text{ and }\\
\Delta =\Delta(m,n)&=\max\left\{\psi' (m),\psi' (n)\right\}=\max\left\{\psi (m_0),\psi (n_0)\right\}.
\end{align*}
By the same type of analysis used in the proof of Lemma \ref{Q_pRoverlaplem} we have that
\begin{align*}
\lambda (A_{m_0}(\psi)\cap &A_{n_0}(\psi))
\le 4\delta\varphi (d_0)\sum_{e|d_0'}c(d_0',e)\#\left\{a\in\N :a\le\frac{2m_0n_0\Delta}{d_0e}, \left(a,\frac{m_0n_0}{d_0e}\right)=1\right\}\\
&=4\varphi (p^k)\delta\varphi (d)\sum_{e|d_0'}c(d_0',e)\#\left\{a\in\N :a\le\frac{2p^kmn\Delta}{de}, \left(a,\frac{p^kmn}{de}\right)=1\right\}.
\end{align*}
By partitioning the divisors $e$ of $d'$ according to the powers of $p$ which divide them we find that
\begin{align*}
\sum_{e|d_0'}&c(d_0',e)\#\left\{a\in\N :a\le\frac{2p^kmn\Delta}{de}, \left(a,\frac{p^kmn}{de}\right)=1\right\}\\
=&\sum_{\ell=0}^k\sum_{e|d'}c(p^kd',p^\ell e)\#\left\{a\in\N :a\le\frac{2p^{k-\ell}mn\Delta}{de}, \left(a,\frac{p^{k-\ell}mn}{de}\right)=1\right\}\\
=&\left(1-\frac{1}{p-1}\right)\sum_{e|d'}c(d',e)\#\left\{a\in\N :a\le\frac{2p^kmn\Delta}{de}, \left(a,\frac{p^kmn}{de}\right)=1\right\}\\
&+\sum_{\ell=1}^k\sum_{e|d'}c(d',e)\#\left\{a\in\N :a\le\frac{2p^{k-\ell}mn\Delta}{de}, \left(a,\frac{p^{k-\ell}mn}{de}\right)=1\right\}\\
\le&\left(1-\frac{1}{p-1}\right)\sum_{e|d'}c(d',e)\#\left\{a\in\N :a\le\frac{2mn(p^k\Delta)}{de}, \left(a,\frac{mn}{de}\right)=1\right\}\\
&+\sum_{\ell=1}^k\sum_{e|d'}c(d',e)\#\left\{a\in\N :a\le\frac{2mn(p^{k-\ell}\Delta)}{de}, \left(a,\frac{mn}{de}\right)=1\right\}.
\end{align*}
Now by appealing to equation (\ref{Roverlap1}) (but being careful of the slight difference in notation) we find that $\lambda (A_{m_0}(\psi)\cap A_{n_0}(\psi))$ is bounded above by
\begin{align}
&\frac{4\varphi (p^k)}{p^k}\left(1-\frac{1}{p-1}\right)(p^k\delta)\varphi (d)\sum_{e|d'}c(d',e)\#\left\{a\in\N :a\le\frac{2mn(p^k\Delta)}{de}, \left(a,\frac{mn}{de}\right)=1\right\}\nonumber\\
&\quad+\sum_{\ell=1}^k\frac{4\varphi (p^k)}{p^{k-\ell}}(p^{k-\ell}\delta)\varphi (d)\sum_{e|d'}c(d',e)\#\left\{a\in\N :a\le\frac{2mn(p^{k-\ell}\Delta)}{de}, \left(a,\frac{mn}{de}\right)=1\right\}\nonumber\\
&\le (1-2/p)\cdot\lambda (A_m(2p^k\psi')\cap A_n(2p^k\psi'))\label{Roverlap3}\\
&\qquad\qquad\qquad\qquad+\sum_{\ell=1}^kp^{\ell-1}(p-1)\cdot\lambda (A_m(2p^{k-\ell}\psi')\cap A_n(2p^{k-\ell}\psi')).\nonumber
\end{align}
Since
\[\lambda (A_m(2p^{k-\ell}\psi')\cap A_n(2p^{k-\ell}\psi'))\le\lambda (A_m(2p^k\psi')\cap A_n(2p^k\psi'))\]
for each $1\le\ell\le k$, equations (\ref{DSdivcond3}) and (\ref{Roverlap3}) imply (\ref{DSlimsupeqn1}) and thus finish the proof.
\end{proof}
Corollary \ref{Q_pRtransfercor} is a trivial consequence of Theorem \ref{Q_pRtransferthm}, so we move on immediately to the proof of Theorem \ref{RQ_ptransferthm}.
\begin{proof}[Proof of Theorem \ref{RQ_ptransferthm}]
Suppose that $\psi$ satisfies (\ref{Q_pDSdivcond1}). If condition (i) of Theorem \ref{dimonethm} is satisfied then we know by the proof of that theorem (below) that $\mu_p(W_p(\psi))=1$. Thus without loss of generality we may assume (by an application of the Borel-Cantelli Lemma) that $\psi(n)<1/n$ for all $n$. By replacing $\psi$ with $\psi/4$ if necessary we may further assume that $\psi (n)<1/4n$ for all $n$. The above mentioned result of Cassels guarantees that this contraction does not affect the measure of the set $W_p(\psi)$. Also by appealing to Lemma \ref{largepsilemma} we may assume without loss of generality that $\psi (n)=0$ for all $n\in p\N.$

Since we are working in $\Z_p$ it does not change anything on the $p-$adic side of things if we round down each of the values taken by the function $\psi$ so that its range is contained in the set $\{0,1,p^{-1},p^{-2},\ldots\}$. Then Lemma \ref{Q_pRoverlaplem} is immediately applicable and we find that
\[\mu_p(\E_m(\psi)\cap\E_n(\psi))\le \frac{3}{2}\cdot\lambda (A_m (2\psi)\cap A_n(2\psi)).\]
For the measures of the sets $A_n$ and $\E_n$ we have that
\[\lambda (A_n(2\psi))=2\cdot\mu_p (\E_n(\psi))=4\varphi (n)\psi (n),\]
and it follows that
\[\limsup_{N\rar\infty}\frac{\left(\sum_{n\le N}
    \mu_p(\E_n(\psi))\right)^2}{\sum_{m,n\le N}
    \mu_p(\E_m(\psi)\cap\E_n(\psi))}\gg\limsup_{N\rar\infty}\frac{\left(\sum_{n\le N}
    \lambda(A_n(\psi))\right)^2}{\sum_{m,n\le N}
    \lambda(A_m(\psi)\cap A_n(\psi))}.\]
Thus the fact that ($QIA_\infty ,\psi$) holds guarantees that ($QIA_p ,\psi$) holds and it follows from Lemma \ref{zeroonelem} that $\mu_p(W_p(\psi))=1$.
\end{proof}

\section{Proofs of Theorems \ref{dimonethm} and \ref{highdimthm}}\label{Q_presultssec}
Theorems \ref{dimonethm} and \ref{highdimthm} are analogues of results which are known to be true for the Duffin-Schaeffer Conjecture. For the most part we will appeal to proofs of the known results and then use Lemma \ref{Q_pRoverlaplem} to transfer them to $\Q_p$. However for parts (i) and (ii) of Theorem \ref{dimonethm} we will work directly with the overlap estimates obtained in Section \ref{overlapsec}.
\begin{proof}[Proof of Theorem \ref{dimonethm}]
As before we assume without loss of generality that $\psi (n)=0$ whenever $p|n.$ For the proof of part (i) we begin by defining $\psi':\N\rar\R$ by
\begin{equation*}
\psi'(n)=\begin{cases}0&\text{if }~\psi (n)<1/n,\\\min\{p\psi (n),1/12\varphi (n)\}&\text{else.}\end{cases}
\end{equation*}
Then we define $\psi'':\N\rar\{0,1,p^{-1},p^{-2},\ldots\}$ by rounding down the values taken by $\psi'$ by a factor less than $p$. From Lemma \ref{mediumpsilemma} it then follows that
\[\sum_{n\in\N}\mu_p(\E_n(\psi''))=\infty.\]
However there is one thing to be careful of here, which is that there could be integers $n\in\N$ for which $0<\psi''(n)<1/n$. Let $S\subseteq\N$ be the set of all such integers. If $n\in S$ then since $p/n\le p\psi (n)$ it follows that \[\min\{p\psi (n),1/12\varphi (n)\}=1/12\varphi (n)<p/n.\]
This means that $\varphi (n)>n/12p$ for all $n\in S$. If the sum
\begin{equation}\label{dimoneeqn1}
\sum_{n\in S}\mu_p(\E_n(\psi''))
\end{equation}
diverges then by replacing $\psi''$ by $\psi''/4$ (which does not affect the divergence of the sum or the measure of $W_p(\psi'')$) we may appeal to part (ii) below to conclude that $\mu_p(W_p(\psi))=\mu_p(W_p(\psi''))=1.$ Thus we may assume that (\ref{dimoneeqn1}) converges and then by applying the Borel-Cantelli Lemma we may arrange for $\psi''(n)$ to be greater than or equal to $1/n$ whenever it is non-zero. Now Lemma \ref{mediumpsilemma} readily applies to show that ($QIA_p,\psi''$) holds, which in turn guarantees that $\mu_p(W_p(\psi))=1$.

For the rest of this proof we assume without loss of generality that $\psi (n)<1/4n$ for all $n$ (i.e. by the same argument used at the beginning of the proof of Theorem \ref{RQ_ptransferthm}). It follows that $\mu_p(\E_n(\psi))=2\varphi (n)\psi (n)$.

For the proof of part (ii) we start from (\ref{Q_poverlap1}) to obtain
\begin{align*}
\mu_p(\E_m\cap\E_n)&\le 6\delta\varphi (d)\sum_{e|d'}c(d',e)\#\left\{a\in\N :a\le\frac{mn\Delta}{de}, \left(a,\frac{mn}{de}\right)=1\right\}\\
&\le 6\delta\varphi (d)\sum_{e|d'}\#\left\{a\in\N :a\le\frac{mn\Delta}{de}, \left(a,\frac{mn}{de}\right)=1\right\}\\
&=6\delta\varphi (d)\#\left\{a\in\N :a\le\frac{mn\Delta}{d}, \left(a,\frac{mn}{dd'}\right)=1\right\}\\
&\le\frac{6\delta\varphi (d)mn\Delta}{d}\\
&\le 6mn\psi (m)\psi (n).
\end{align*}
Then we have that
\begin{equation*}
\limsup_{N\rar\infty}\frac{\left(\sum_{n\le N}
    \mu_p(\E_n)\right)^2}{\sum_{m,n\le N}
    \mu_p(\E_m\cap \E_n)}\gg \limsup_{N\rar\infty}\left(\frac{\sum_{n\le N}
    \varphi (n)\psi (n)}{\sum_{n\le N}n\psi (n)}\right)^2>0,
\end{equation*}
which implies that $\mu_p (W_p(\psi))=1.$

For the proof of part (iii) first notice that
\begin{align}
\sum_{\substack{n\le N\\p\nmid n}}\varphi (n)&=\sum_{\substack{n\le N\\p\nmid n}}n\sum_{d|n}\frac{\mu (d)}{d}=\sum_{\substack{d\le N\\p\nmid d}}\mu (d)\sum_{\substack{e\le N/d\\p\nmid e}}e\nonumber\\
&=\sum_{\substack{d\le N\\p\nmid d}}\mu (d)\left(\frac{(p-1)N^2}{2pd^2}+O\left(\frac{N}{d}\right)\right)\nonumber\\
&=\frac{(p-1)N^2}{2p}\sum_{\substack{d=1\\p\nmid d}}^\infty\frac{\mu (d)}{d^2}+O\left(N\sum_{d\le N}\frac{|\mu (d)|}{d}\right)\nonumber\\
&=\frac{(p-1)N^2}{2p}(1-p^{-2})^{-1}\zeta (2)^{-1}+O(N\log N)\nonumber\\
&=\frac{3pN^2}{(p+1)\pi^2}+O(N\log N).\label{phieqn1}
\end{align}
Write $\N\setminus p\N=\{n_1<n_2<\ldots\}$. By partial summation we have that
\begin{align}
\sum_{k\le N}\varphi (n_k)\psi (n_k)=&\sum_{k\le N}n_k^{-\gamma}\varphi (n_k)n_k^\gamma\psi (n_k)\nonumber\\
=&\sum_{k\le N}(n_k^{-\gamma}\psi (n_k)-n_{k+1}^{-\gamma}\psi (n_{k+1}))\sum_{\ell=1}^kn_\ell^\gamma\varphi (n_\ell)\label{partsumeqn1}\\
&+n_{N+1}^{-\gamma}\psi (n_{N+1})\sum_{\ell=1}^Nn_\ell^\gamma\varphi (n_\ell).\nonumber
\end{align}
By hypothesis the sequence $\{n_k^{-\gamma}\psi(n_k)\}$ is non-increasing, and we may assume by choosing $\gamma$ larger if necessary that $\gamma\ge -1$. Then it is easy to check that (\ref{phieqn1}) implies that
\[\sum_{\ell=1}^kn_\ell^\gamma\varphi (n_\ell)\gg_{\gamma}\sum_{\ell=1}^kn_\ell^{1+\gamma}.\]
Substituting this back into (\ref{partsumeqn1}) and reversing the partial summation then gives that
\[\sum_{k\le N}\varphi (n_k)\psi (n_k)\gg \sum_{k\le N}n_k\psi (n_k),\]
so by part (ii) we have that $\mu_p(W_p (\psi))=1$.

Part (iv) is a $p-$adic analogue of the Erd\"{o}s-Vaaler Theorem (\cite{Erdos}, \cite{Vaaler}). In Section 2.4 of \cite{HarmanMNT} it is shown that if $\psi (n)\ll n^{-2}$ then ($QIA_\infty , \psi$) holds. Thus by an application of Lemma \ref{Q_pRoverlaplem} we conclude that ($QIA_p, \psi$) holds.

Part (v) is the analogue of a result recently proved in \cite{HayPolVel} (which can also be proved by using \cite[Theorem 1 (iv)]{Harman1990}). Again, the extra divergence condition here guarantees that ($QIA_\infty , \psi$) holds, which in turn guarantees that ($QIA_p, \psi$) holds.
\end{proof}
Finally we give the following proof of Theorem \ref{highdimthm}.
\begin{proof}[Proof of Theorem \ref{highdimthm}]
Pollington and Vaughan were the first to give a complete proof of the Duffin-Schaeffer Conjecture in all dimensions greater than one (\cite{VaughanPollington}). This would correspond to the choice $\ell\ge 2$ and $k=0$ in our setup, although for our presentation we require that $k>0$. We could follow Pollington and Vaughan's original line of proof but in order to maintain the consistency of ideas in this paper we choose instead to follow the proof given in Section 3.6 of \cite{HarmanMNT}. There it is shown for $\ell\ge 2$ that as $N\rar\infty$,
\begin{equation}\label{pollvaughbound1}
\sum_{m,n\le N}\lambda (A_m(\psi)\cap A_n(\psi))^\ell\ll\left(\sum_{n\le N}\lambda (A_n(\psi))^\ell\right)^2.
\end{equation}
This (in light of a known zero-one law) is enough to ensure that if the sum of measures diverges then the $\ell$-dimensional Lebesgue measure of the set
\[\{(x_1,\ldots ,x_\ell)\in(\R/\Z)^\ell:(x_1,\ldots ,x_\ell)\in (A_n (\psi))^\ell\text{ for infinitely many } n\}\]
is equal to one.

A minor modification of the proof of Theorem \ref{dimonethm}, part (i) can be used here to deal with the case where $\psi (n)\ge 1/n$. Thus for our proof we will assume without loss of generality as before that $\psi(n)<1/4n$. By first observing that
\begin{equation*}
\E_n^{\ell,p_1,\ldots , p_k}(\psi)=(A_n(\psi))^\ell\times\prod_{i=1}^k\E_n^{0,p_i}(\psi),
\end{equation*}
we find that
\begin{align*}
\mu_{\ell,p_1,\ldots , p_k}(\E_n^{\ell,p_1,\ldots , p_k}(\psi))&=\lambda(A_n(\psi))^\ell\times\prod_{i=1}^k\mu_{p_i}(\E_n^{0,p_i}(\psi))\ge\frac{\lambda(A_n(2\psi))^{\ell+k}}{2^{\ell+k}p_1\cdots p_k}\\
\intertext{and} \mu_{\ell,p_1,\ldots , p_k}(\E_m^{\ell,p_1,\ldots , p_k}(\psi)\cap\E_n^{\ell,p_1,\ldots , p_k}(\psi))&=\lambda(A_m(\psi)\cap A_n(\psi))^\ell\times\prod_{i=1}^k\mu_{p_i}(\E_m^{0,p_i}(\psi)\cap\E_n^{0,p_i}(\psi)).
\end{align*}
Now suppose that $m,n\in\N$ and that $p_i\nmid m,n$ for any $1\le i\le k$ (as in the proof of Lemma \ref{zeroonelem} there is no loss in generality in assuming this). For each $1\le i\le k$ let $M_i$ and $N_i$ be the unique integers which satisfy
\begin{equation*}
\psi (m)\in [p_i^{-M_i},p_i^{1-M_i})~\text{ and }~\psi (n)\in [p_i^{-N_i},p_i^{1-N_i}).
\end{equation*}
Then for each $1\le i\le k$ we have
\[\E_m^{0,p_i}(\psi)=\E_m^{0,p_i}(p_i^{-M_i})~\text{ and }~\E_n^{0,p_i}(\psi)=\E_n^{0,p_i}(p_i^{-N_i}),\]
so Lemma \ref{Q_pRoverlaplem} gives that
\begin{equation*}
\mu_{p_i}(\E_m^{0,p_i}(\psi)\cap\E_n^{0,p_i}(\psi))\le\frac{3}{2}\cdot\lambda (A_m(2p_i^{-M_i})\cap A_n (2p_i^{-N_i}))\le\frac{3}{2}\cdot\lambda (A_m(2\psi)\cap A_n(2\psi)).
\end{equation*}
Putting this all together and appealing to (\ref{pollvaughbound1}) gives us
\begin{align*}
\sum_{m,n\le N}\mu_{\ell,p_1,\ldots , p_k}(\E_m^{\ell,p_1,\ldots , p_k}(\psi)\cap\E_n^{\ell,p_1,\ldots , p_k}(\psi))&\ll\sum_{m,n\le N}\lambda (A_m(2\psi)\cap A_n(2\psi))^{\ell+k}\\
&\ll\left(\sum_{n\le N}\lambda (A_n(2\psi))^{\ell+k}\right)^2\\
&\ll\left(\sum_{n\le N}\mu_{\ell,p_1,\ldots , p_k}(\E_n^{\ell,p_1,\ldots , p_k}(\psi))\right)^2,
\end{align*}
as $N\rar\infty$. This is quasi-independence on average in the limit as $N\rar\infty$ and by a standard probabilistic argument (for example Lemma 2.3 of \cite{HarmanMNT}) together with our zero-one law we then have that $\mu_{\ell, p_1,\ldots,p_k}(W_{\ell, p_1,\ldots,p_k}(\psi))=1$.
\end{proof}

\section{Hausdorff measure generalizations}\label{hausdorffsection}
There are natural generalizations of the Duffin-Schaeffer Conjecture in which one considers the metric theory of approximation on $\R/\Z$ with Lebesgue measure replaced by the Hausdorff measure associated with a dimension function. We will explain the details of this below but for now we simply wish to observe that the Hausdorff measure conjectures seem at first to be refinements of the classical conjecture. However it was recently proved by V.~Beresnevich and S.~Velani that if the Lebesgue measure version of the Duffin-Schaeffer Conjecture is true then so are all of its real Hausdorff measure generalizations (\cite{BeresnevichVelani06}). In this section we will extend these results to the $p-$adic fields.

Suppose that $(X,d)$ is a metric space with the property that for every $\rho >0$ the space can be covered by a countable collection of balls with diameters less than $\rho$. If $F\subset X$ and $\rho >0$ then a collection of balls $\{B_n\}_{n\in\N}$ in $X$ with diameters $\{d_n\}_{n\in\N}$ is called a \emph{$\rho-$cover} of $F$ if $d_n\le\rho$ for all $n$ and if
\[F\subseteq\bigcup_{n\in\N}B_n.\]
A function $f:\R^+\rar\R^+$ is called a \emph{dimension function} if it is non-decreasing, continuous, and satisfies $f(r)\rar 0$ as $r\rar 0^+$. For every dimension function $f$ and for every $\rho>0$ and $F\subseteq X$ we may define
\[\HH^f_\rho(F)=\inf_{\substack{\rho\text{-covers}\\\text{of }F}}\left(\sum_{n\in\N}f(d_n)\right)\]
where the infimum is taken over all $\rho-$covers of $F$ as above. The \emph{Hausdorff $f-$measure} of $F$ is then defined to be
\[\HH^f(F)=\lim_{\rho\rar 0^+}\HH^f_\rho(F).\]
The function $\HH^f$ is a metric outer measure on $(X,d)$ and its restriction to the Borel subsets of $X$ (or more generally the Carath\'{e}odory-measurable subsets of $X$) is a measure. When $f(r)=r^m$ for some $m\ge 0$ then $\HH^f$ is usually denoted by $\HH^m$.

The following examples illustrate some points which are important to us:
\begin{itemize}
\item[(i)] For $\ell\in\N$ let $X=\R^\ell$, equipped with the Euclidean norm, and let $f(r)=r^\ell$. It is easy to see that $\HH^f$ corresponds with a constant multiple of the $\ell-$dimensional Lebesgue measure on $\R^\ell$. In the special case when $\ell=1$ we have that $\HH^f=\lambda$.
\item[(ii)] For $\ell\in\N$ let $X=\R^\ell$, equipped with the sup-norm, and let $f(r)=r^\ell$. In this case $\HH^f=\lambda^\ell$.
\item[(iii)] Let $X=\Q_p$, equipped with the $p-$adic norm, and let $f(r)=r$. It follows from the strong triangle inequality that the diameter of a ball in $\Q_p$ of radius $p^{-N}$ is equal to $p^{-N}$, and so by comparing definitions we have that $\HH^f=\mu_p$.
\item[(iv)] Let $\ell$ be a non-negative integer, $k\in\N$, and $p_1,\ldots , p_k$ primes. Also let $X=(\R/\Z)^\ell\times\Z_{p_1}\times\cdots\times\Z_{p_k}$, equipped with the norm $|~|_{\ell,p_1,\ldots ,p_k}$. We will show in Lemma \ref{measequivlem} below that $\HH^{\ell+k}$ is a constant multiple of $\mu_{\ell,p_1,\ldots , p_k}$.
\end{itemize}

The Hausdorff measure version of the Duffin-Schaeffer Conjecture presented in \cite{BeresnevichVelani06} is a special case of the following conjecture, which we formulate using the same notation as in Theorem \ref{highdimthm} above.
\begin{conjecture}\label{HpDSC}
Let $\ell$ and $k$ be non-negative integers whose sum is greater than zero, and if $k>0$ let $p_1,\ldots ,p_k$ be (not necessarily distinct) primes. Then for any dimension function $f$ with the property that $f(r)/r^{\ell+k}$ is monotonic, we have that
\[\HH^f(W_{\ell,p_1,\ldots ,p_k}(\psi))=\HH^f\left((\R/\Z)^\ell\times\Z_{p_1}\times\cdots\times\Z_{p_k}\right)\]
if and only if
\[\sum_{n\in\N}f(\psi (n))\varphi (n)^{\ell+k}=\infty.\]
\end{conjecture}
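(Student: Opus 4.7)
The plan is to deduce Conjecture \ref{HpDSC} from the Haar-measure results of the paper by invoking the Mass Transference Principle of Beresnevich and Velani \cite{BeresnevichVelani06}. The convergence direction is a standard Hausdorff--Cantelli argument: since $\E_n^{\ell,p_1,\ldots,p_k}(\psi)$ is a union of at most $(2\varphi(n))^{\ell+k}$ balls of diameter $\asymp\psi(n)$, if $\sum_n \varphi(n)^{\ell+k} f(\psi(n))$ converges these balls provide $\rho$-covers of $W_{\ell,p_1,\ldots,p_k}(\psi)$ for arbitrarily small $\rho$ whose $f$-weights tend to zero, yielding $\HH^f(W_{\ell,p_1,\ldots,p_k}(\psi))=0$.

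For the divergence case I would first establish Lemma \ref{measequivlem}: on the ambient space $X:=(\R/\Z)^\ell\times\Z_{p_1}\times\cdots\times\Z_{p_k}$ equipped with the norm $|\,\cdot\,|_{\ell,p_1,\ldots,p_k}$, the Hausdorff $(\ell+k)$-measure agrees up to a constant with $\mu_{\ell,p_1,\ldots,p_k}$. This is immediate from the product structure and the fact that each $p$-adic ball of radius $p^{-N}$ has both diameter and $\mu_{p_i}$-measure equal to $p^{-N}$; each factor is Ahlfors $1$-regular and the sup-norm product is then Ahlfors $(\ell+k)$-regular. With this identification in hand, set $\tilde\psi(n):=f(\psi(n))^{1/(\ell+k)}$. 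The Mass Transference Principle then reduces $\HH^f(W_{\ell,p_1,\ldots,p_k}(\psi))=\HH^f(X)$ to the Haar-measure statement
\begin{equation*}
\mu_{\ell,p_1,\ldots,p_k}\bigl(W_{\ell,p_1,\ldots,p_k}(\tilde\psi)\bigr)=1,
\end{equation*}
because each ball of $\E_n^{\ell,p_1,\ldots,p_k}(\psi)$ has radius $\psi(n)$ and the corresponding MTP-dilated ball has radius $f(\psi(n))^{1/(\ell+k)}=\tilde\psi(n)$.

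After the standard preliminary reductions (Cassels' lemma to force $\tilde\psi(n)<1/(4n)$, disposal of the large-$\tilde\psi$ regime via a variant of Lemma \ref{largepsilemma}, and removal of $n\in p_i\N$ as in Section \ref{Q_presultssec}), one has $\mu_{\ell,p_1,\ldots,p_k}(\E_n^{\ell,p_1,\ldots,p_k}(\tilde\psi)) \asymp \varphi(n)^{\ell+k}\tilde\psi(n)^{\ell+k}=\varphi(n)^{\ell+k}f(\psi(n))$, so the divergence hypothesis of Conjecture \ref{HpDSC} becomes precisely the divergence required to invoke the Haar-measure results. For $\ell+k\geq 2$ this is supplied unconditionally by Theorem \ref{highdimthm}, so Conjecture \ref{HpDSC} becomes a theorem in that range; for $\ell+k=1$ the statement must be conditioned on Conjecture \ref{Q_pDSC} (or, when $k=0$, the classical Duffin-Schaeffer Conjecture).

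The main obstacle I expect lies not in the algebraic machinery but in verifying that the Beresnevich--Velani MTP, originally formulated for subsets of Euclidean space, transfers cleanly to the non-Archimedean product space $X$. The requisite Ahlfors regularity and the monotonicity hypothesis on $f(r)/r^{\ell+k}$ must be checked in the product sup-norm, and one must confirm that the notion of a ball of radius $\tilde\psi(n)$ in each factor (a genuine interval on the real side, a coset-ball on the $p$-adic side) is compatible with the MTP formalism; since the relevant extensions of the MTP to Ahlfors-regular metric spaces are already available, I expect this step to be technical rather than essentially novel.
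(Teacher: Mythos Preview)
Your proposal is correct and follows essentially the same route as the paper: the convergence half via the standard Hausdorff--Cantelli cover, and the divergence half by identifying $\HH^{\ell+k}$ with a constant multiple of $\mu_{\ell,p_1,\ldots,p_k}$ (the paper argues this via uniqueness of Haar measure rather than Ahlfors regularity, but the content is the same) and then applying the Mass Transference Principle with $m=\ell+k$ to reduce to Theorem~\ref{highdimthm} when $\ell+k\ge 2$, or to Conjecture~\ref{Q_pDSC} when $\ell+k=1$. Your formulation with $\tilde\psi(n)=f(\psi(n))^{1/(\ell+k)}$ is in fact more carefully stated than the paper's own write-up of the proof of Theorem~\ref{Q_pMTPthm1}, and your concern about the applicability of the MTP in the product metric space is addressed in the paper simply by quoting the general metric-space version of \cite[Theorem~3]{BeresnevichVelani06}.
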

The condition that $f(r)/r^{\ell+k}$ be monotonic is justified by the fact that the space $(\R/\Z)^\ell\times\Z_{p_1}\times\cdots\times\Z_{p_k}$ has Hausdorff dimension $\ell+k$, and it is consistent with the conditions of \cite[Conjecture 2]{BeresnevichVelani06}. In fact since we are working in an $\ell+k$ dimensional space we will always assume without loss of generality that $f(r)/r^{\ell+k}$ does not tend to zero as $r\rar 0^+$. For a more detailed explanation of why we can do this see \cite[Lemma 1]{BeresnevichVelani06}.

As before one direction of Conjecture \ref{HpDSC} is relatively easy to prove. If we suppose that \[\sum_{n\in\N}f(\psi (n))\varphi (n)^{\ell+k}<\infty\]
then it is clear that we may assume that $\psi (n)\rar 0$ as $n\rar\infty$, and for any $\rho>0$ we may choose $N_0\in\N$ large enough that $\psi (n)<\rho$ for all $n\ge N_0$. Thus for any $N\ge N_0$ the component intervals of the sets \[\E_n^{\ell,p_1,\ldots , p_k}(\psi),\quad n\ge N,\] form a $\rho-$cover of $W_{\ell,p_1,\ldots , p_k}(\psi)$. This means that
\[\HH^f_\rho(W_{\ell,p_1,\ldots , p_k}(\psi))\le\sum_{n=N}^\infty f(\psi (n))(2\varphi (n))^{\ell+k}\] for all $N$. Taking the limit as $N\rar\infty$ we find that $\HH^f_\rho(W_{\ell,p_1,\ldots , p_k}(\psi))=0$ for all $\rho>0$, which in turn implies that $\HH^f(W_{\ell,p_1,\ldots , p_k}(\psi))=0.$

One of the theorems proved in \cite{BeresnevichVelani06} is that Conjecture \ref{HpDSC} is true whenever $\ell > 1$ and $k=0$. We will extend this result in the following way.
\begin{theorem}\label{Q_pMTPthm1}
Conjecture \ref{HpDSC} is true whenever $\ell+k> 1$.
\end{theorem}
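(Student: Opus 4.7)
The plan is to derive Theorem \ref{Q_pMTPthm1} from Theorem \ref{highdimthm} by invoking the Mass Transference Principle of Beresnevich-Velani, which is precisely the tool that established the $\ell \ge 2$, $k=0$ case in \cite{BeresnevichVelani06}. The ambient space $X=(\R/\Z)^\ell\times\Z_{p_1}\times\cdots\times\Z_{p_k}$, equipped with the norm $|\cdot|_{\ell,p_1,\ldots,p_k}$, is a compact Ahlfors $(\ell+k)$-regular metric space, and by Lemma \ref{measequivlem} the Haar probability measure $\mu_{\ell,p_1,\ldots,p_k}$ agrees up to a positive constant with $\HH^{\ell+k}$. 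This places us squarely within the MTP framework. The convergence half of the theorem has already been sketched via a direct $\rho$-cover argument in the excerpt, so only the divergence half requires work.

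For the divergence half, assume $\sum_n f(\psi(n))\varphi(n)^{\ell+k}=\infty$ and define an auxiliary approximating function $\tilde{\psi}:\N\rar\R$ by
\[
\tilde{\psi}(n)=f(\psi(n))^{1/(\ell+k)}.
\]
Since $f(r)/r^{\ell+k}$ is monotonic and, by the normalization stipulated before the statement of Conjecture \ref{HpDSC}, does not tend to $0$ as $r\rar 0^+$, the ratio $\tilde{\psi}(n)/\psi(n)$ is bounded below by a positive constant. This guarantees that each component ball of $\E_n^{\ell,p_1,\ldots,p_k}(\tilde{\psi})$ is at least as large as the corresponding ball of $\E_n^{\ell,p_1,\ldots,p_k}(\psi)$, which is exactly the inflation scenario required by the MTP.

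I would then compute, using the same simple volume formulas that appear in the proof of Theorem \ref{highdimthm}, that for all sufficiently small $\tilde{\psi}(n)$ (handling larger values by truncation together with Lemma \ref{largepsilemma} as in the earlier arguments),
\[
\mu_{\ell,p_1,\ldots,p_k}\!\left(\E_n^{\ell,p_1,\ldots,p_k}(\tilde{\psi})\right)\asymp \varphi(n)^{\ell+k}\tilde{\psi}(n)^{\ell+k}=\varphi(n)^{\ell+k}f(\psi(n)).
\]
The divergence hypothesis therefore forces $\sum_n\mu_{\ell,p_1,\ldots,p_k}(\E_n^{\ell,p_1,\ldots,p_k}(\tilde{\psi}))=\infty$, and Theorem \ref{highdimthm} applied to $\tilde{\psi}$ yields $\mu_{\ell,p_1,\ldots,p_k}(W_{\ell,p_1,\ldots,p_k}(\tilde{\psi}))=1$, i.e.\ full $\HH^{\ell+k}$-measure by Lemma \ref{measequivlem}. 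Since $W_{\ell,p_1,\ldots,p_k}(\psi)=\limsup_n\E_n^{\ell,p_1,\ldots,p_k}(\psi)$, and the set $W_{\ell,p_1,\ldots,p_k}(\tilde{\psi})$ is precisely the limsup of the balls obtained by replacing each radius $\psi(n)$ by $f(\psi(n))^{1/(\ell+k)}$, the MTP delivers $\HH^f(W_{\ell,p_1,\ldots,p_k}(\psi))=\HH^f(X)$, as required.

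The main obstacle is of a technical rather than conceptual nature: one must verify that the MTP as formulated in \cite{BeresnevichVelani06} is applicable to this hybrid Archimedean/non-Archimedean space. Concretely, one needs the ambient metric, the underlying measure, and the limsup collection of balls to satisfy the doubling and Ahlfors-regularity hypotheses, and one must apply the MTP to the aggregated family of all component balls indexed by pairs $(n,(a_1,\ldots,a_{\ell+k}))$ rather than to the compound sets $\E_n^{\ell,p_1,\ldots,p_k}(\psi)$ themselves. The Ahlfors regularity of $X$ in the sup-norm, together with Lemma \ref{measequivlem}, supplies the required measure-theoretic input; a brief verification that the MTP hypotheses are met in this product setting is really the only substantive point to check.
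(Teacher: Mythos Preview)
Your proposal is correct and follows essentially the same route as the paper: reduce to Theorem \ref{highdimthm} applied to the auxiliary function $\tilde\psi(n)=f(\psi(n))^{1/(\ell+k)}$, invoke Lemma \ref{measequivlem} to pass between $\mu_{\ell,p_1,\ldots,p_k}$ and $\HH^{\ell+k}$, and then apply the Mass Transference Principle. Your write-up is in fact slightly more careful than the paper's in making the exponent $1/(\ell+k)$ explicit, and you correctly flag that the only point requiring verification is that the MTP hypotheses hold for the product metric space $(\R/\Z)^\ell\times\Z_{p_1}\times\cdots\times\Z_{p_k}$.
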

Furthermore in \cite{BeresnevichVelani06} is was proved that if the Duffin-Schaeffer Conjecture is true then Conjecture \ref{HpDSC} is true when $\ell=1$ and $k=0$. To cover the remaining case when $\ell=0$ and $k=1$ we provide the following theorem.
\begin{theorem}\label{Q_pMTPthm2}
If Conjecture \ref{Q_pDSC} is true then Conjecture \ref{HpDSC} is true whenever $\ell=0$ and $k=1$. It follows that if ($QIA_\infty$) holds then Conjecture \ref{HpDSC} is true.
\end{theorem}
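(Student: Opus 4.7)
The main tool will be the Beresnevich--Velani Mass Transference Principle, applied in the non-Archimedean setting. Since $\Z_p$ equipped with its $p$-adic metric is Ahlfors $1$-regular with reference measure $\mu_p$ (indeed balls of radius $p^{-N}$ have $\mu_p$-measure exactly $p^{-N}$), the appropriate general form of the MTP takes the following shape: given a countable collection of balls $\{B(c_i, r_i)\}$ in $\Z_p$ and any dimension function $f$ with $f(r)/r$ monotonic, if
\[\mu_p\bigl(\{x \in \Z_p : x \in B(c_i, f(r_i)) \text{ for infinitely many } i\}\bigr) = 1,\]
then
\[\HH^f\bigl(\{x \in \Z_p : x \in B(c_i, r_i) \text{ for infinitely many } i\}\bigr) = \HH^f(\Z_p).\]

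Assuming Conjecture \ref{Q_pDSC}, let $f$ be a dimension function with $f(r)/r$ monotonic and $\sum_n f(\psi(n))\varphi(n) = \infty$. The idea is to introduce the auxiliary function $\psi'(n) = f(\psi(n))$ and apply Conjecture \ref{Q_pDSC} to it. First I would carry out the standard preliminary reductions available from the proof of Theorem \ref{RQ_ptransferthm}: by Cassels' lemma I may contract the values of $\psi'$ to force $\psi'(n) < 1/4n$, by Lemma \ref{largepsilemma} I may insist $p \nmid n$ on the support of $\psi'$, and I may round $\psi'$ down to the set $\{0, 1, p^{-1}, p^{-2}, \ldots\}$. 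After these reductions the component balls of $\E_n(\psi')$ are pairwise disjoint and $\mu_p(\E_n(\psi')) = 2\varphi(n) f(\psi(n))$, so the hypothesis translates to the divergence condition (\ref{Q_pDSdivcond1}) for $\psi'$. Applying Conjecture \ref{Q_pDSC} yields $\mu_p(W_p(\psi')) = 1$.

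Now $W_p(\psi)$ and $W_p(\psi')$ are exactly the limsup sets of the countable collection of $p$-adic balls centered at the rationals $a/n$ with $|a|\le n$ and $(a,n)=1$, with radii $\psi(n)$ and $f(\psi(n))$ respectively. The MTP displayed above therefore transfers the full-$\mu_p$-measure statement for $W_p(\psi')$ to the full-$\HH^f$-measure statement $\HH^f(W_p(\psi)) = \HH^f(\Z_p)$, proving the first assertion. The contraction of $\psi'$ does not affect $W_p(\psi)$ because the MTP is insensitive to multiplying the radii $\psi(n)$ by a fixed positive constant, by the usual rescaling argument already used in the proof of Lemma \ref{zeroonelem}.

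For the second assertion, $(QIA_\infty)$ implies the Duffin-Schaeffer Conjecture by the probabilistic argument recalled in the introduction, and it implies Conjecture \ref{Q_pDSC} for every prime $p$ by Theorem \ref{RQ_ptransferthm}. The Beresnevich-Velani result quoted just before the present theorem then handles Conjecture \ref{HpDSC} in the case $\ell = 1$, $k = 0$; the first assertion of the present theorem handles $\ell = 0$, $k = 1$ for every $p$; and Theorem \ref{Q_pMTPthm1} handles all cases with $\ell + k > 1$ unconditionally. Together these cover every instance of Conjecture \ref{HpDSC}. The one point that requires care is verifying that the Beresnevich-Velani MTP does apply in $\Z_p$, but since $\Z_p$ is an ultrametric Ahlfors $1$-regular measure space this is a routine verification of hypotheses of the general MTP rather than a genuine obstacle; I expect this to be the only piece of technical bookkeeping in the argument.
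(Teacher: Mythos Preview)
Your overall strategy---apply Conjecture~\ref{Q_pDSC} to the auxiliary function $\psi'=f\circ\psi$ and then invoke the Mass Transference Principle---is exactly what the paper does, and your derivation of the second assertion (combining Theorem~\ref{RQ_ptransferthm}, Theorem~\ref{Q_pMTPthm1}, and the Beresnevich--Velani result for $\ell=1,k=0$) also matches.

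There is, however, a genuine gap in how you pass from the hypothesis $\sum_n\varphi(n)f(\psi(n))=\infty$ to the divergence condition $\sum_n\mu_p(\E_n(\psi'))=\infty$ needed to apply Conjecture~\ref{Q_pDSC}. You write that ``by Cassels' lemma I may contract the values of $\psi'$ to force $\psi'(n)<1/4n$'', but Cassels' lemma only allows multiplication of $\psi'$ by a \emph{fixed} constant; this cannot force $\psi'(n)<1/4n$ unless one already knows $\psi'(n)=O(1/n)$. The reductions in the proof of Theorem~\ref{RQ_ptransferthm} that you cite achieve this only after first appealing to Theorem~\ref{dimonethm}(i), and that appeal already presupposes divergence of $\sum_n\mu_p(\E_n)$---the very thing you are trying to establish. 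So the reduction is circular as written.

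The paper sidesteps this by not attempting any contraction at all. Instead it quotes Lemma~\ref{mediumpsilemma} to obtain, for every $n$ with $p\nmid n$, the uniform lower bound
\[
\mu_p\bigl(\E_n(f(\psi(n)))\bigr)\ \ge\ \min\left\{\frac{f(\psi(n))\varphi(n)}{p},\ \frac{1}{12}\right\},
\]
from which $\sum_n\mu_p(\E_n(\psi'))=\infty$ follows immediately. Replacing your reduction step with this single lower-bound estimate repairs the argument and brings it in line with the paper's proof.
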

The proofs of Theorems \ref{Q_pMTPthm1} and \ref{Q_pMTPthm2} depend crucially on \cite[Theorem 3]{BeresnevichVelani06}, a result which we will refer to as the Mass Transference Principle. In order to present it suppose that $(X,d)$ is a metric space, that $m\in\R^+$, and that $f$ is a dimension function. Then for each ball $B=B(x,r)\subseteq X$ we define the ball $B^f\subseteq X$ by \[B^f=B\left(x,f(r)^{1/m}\right).\] The result which we need is the following.
\begin{MTP}\cite[Theorem 3]{BeresnevichVelani06}\label{MTPthm}
Let $\{B_n\}_{n\in\N}$ be a sequence of balls in $X$ with radii tending to zero as $n\rar \infty$. Let $f$ be a dimension function such that $f(x)/x^m$ is monotonic and suppose that for any ball $B\subseteq X$
\[\HH^m\left(B\cap\limsup_{n\rar\infty}B_n^f\right)=\HH^m(B).\]
Then for any ball $B\subseteq X$ we have that
\[\HH^f\left(B\cap\limsup_{n\rar\infty}B_n\right)=\HH^f(B).\]
\end{MTP}
With the aid of the Mass Transference Principle, Theorem \ref{Q_pMTPthm2} is now within easy reach.
\begin{proof}[Proof of Theorem \ref{Q_pMTPthm2}]
Let us write $p_1=p$. If $\psi (n)$ does not tend to zero as $n\rar\infty$ then by Lemma \ref{largepsilemma} we have that $W_p(\psi)=\Z_p$ and both Conjectures \ref{Q_pDSC} and \ref{HpDSC} are trivial. Therefore we assume that $\psi (n)\rar 0$ as $n\rar\infty$ and that
\[\sum_{n\in\N}f(\psi(n))\varphi (n)=\infty.\]
Also as before it will suffice to prove the theorem under the assumption that $\psi (n)=0$ whenever $p|n$.

By Lemma \ref{mediumpsilemma} for any $n\in\N$ we have that
\[\mu_p(\E_n(f(\psi (n))))\ge\min\left\{\frac{f(\psi(n))\varphi (n)}{p},\frac{1}{12}\right\}\]
and so
\[\sum_{n\in\N}\mu_p(\E_n(f(\psi(n))))=\infty.\]
Upon noting that $\HH^1=\mu_p$ Conjecture \ref{HpDSC} follows immediately from Conjecture \ref{Q_pDSC} and the Mass Transference Principle with $m=1$.

The second statement Theorem \ref{Q_pMTPthm2} then follows from Theorems \ref{RQ_ptransferthm} and \ref{Q_pMTPthm1} and \cite[Theorem 1]{BeresnevichVelani06}.
\end{proof}
The proof of Theorem \ref{Q_pMTPthm1} will use the same ideas but first we need the following lemma.
\begin{lemma}\label{measequivlem}
Suppose that $\ell$ is a non-negative integer, that $k\in\N$, and that $p_1,\ldots , p_k$ are (not necessarily distinct) primes. Let $\HH^{\ell+k}$ be $\ell+k$ dimensional Hausdorff measure on the space $(\R/\Z)^\ell\times\Z_{p_1}\times\cdots\times\Z_{p_k}$, equipped with the sup-norm $|~|_{\ell,p_1,\ldots ,p_k}$. Then $\HH^{\ell+k}$ is a constant multiple of $\mu_{\ell,p_1,\ldots , p_k}.$
\end{lemma}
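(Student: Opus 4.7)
The plan is to show that the restriction of $\HH^{\ell+k}$ to Borel subsets of $X := (\R/\Z)^\ell \times \Z_{p_1} \times \cdots \times \Z_{p_k}$ is a non-trivial, finite, translation-invariant Borel measure. Since $X$ is a compact Hausdorff abelian topological group (a finite product of the compact groups $\R/\Z$ and $\Z_{p_i}$) and $\mu_{\ell,p_1,\ldots,p_k}$ is its normalized Haar measure, the uniqueness of Haar measure up to a positive scalar will give the result.

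First I would verify that $\HH^{\ell+k}$ is translation-invariant. The real component norm $\|\cdot\|$ is invariant under translation by elements of $\R/\Z$, and each $p_i$-adic norm $|\cdot|_{p_i}$ is invariant under translation by elements of $\Z_{p_i}$, so the sup-norm $|\cdot|_{\ell,p_1,\ldots,p_k}$ is invariant under translation on $X$. Every translation is therefore an isometry, and $\HH^{\ell+k}$ inherits translation-invariance. Standard Carath\'{e}odory theory guarantees that $\HH^{\ell+k}$ restricts to a Borel regular measure.

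Next I would show that $0<\HH^{\ell+k}(X)<\infty$. For the upper bound, for each $n\in\N$ partition $\R/\Z$ into $n$ intervals of length $1/n$ and, choosing $M_i\in\N$ with $p_i^{-M_i}\le 1/n<p_i^{1-M_i}$, partition each $\Z_{p_i}$ into its $p_i^{M_i}$ cosets of $p_i^{M_i}\Z_{p_i}$. Taking products yields a cover of $X$ by at most $\bigl(\prod_{i=1}^k p_i\bigr)\cdot n^{\ell+k}$ pieces, each of sup-diameter at most $1/n$, so
\begin{equation*}
\HH^{\ell+k}_{1/n}(X)\;\le\;\Big(\prod_{i=1}^k p_i\Big)\cdot n^{\ell+k}\cdot(1/n)^{\ell+k}\;=\;\prod_{i=1}^k p_i.
\end{equation*}
For the lower bound, I would use the standard mass distribution principle. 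If $A\subseteq X$ has sup-diameter $\delta\le 1/2$ then $A$ is contained in a sup-ball of radius $\delta$; such a ball has Lebesgue measure at most $2\delta$ in each $\R/\Z$ factor, and since $\mu_{p_i}$ of a $p_i$-adic ball of radius $\delta$ equals the largest power $p_i^{-N}\le\delta$, at most $\delta$ in each $\Z_{p_i}$ factor. Consequently
\begin{equation*}
\mu_{\ell,p_1,\ldots,p_k}(A)\;\le\;2^\ell\,(\operatorname{diam} A)^{\ell+k}.
\end{equation*}
Summing this bound over any $\rho$-cover of $X$ with $\rho\le 1/2$ gives $1=\mu_{\ell,p_1,\ldots,p_k}(X)\le 2^\ell\,\HH^{\ell+k}_\rho(X)$, whence $\HH^{\ell+k}(X)\ge 2^{-\ell}>0$.

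The main obstacle -- mild but worth mentioning -- is invoking the correct form of uniqueness of Haar measure. Having established that $\HH^{\ell+k}|_{\text{Borel}}$ is a nonzero, finite, translation-invariant Borel regular (hence Radon) measure on the compact Hausdorff abelian group $X$, the uniqueness theorem for Haar measure forces $\HH^{\ell+k}=c\,\mu_{\ell,p_1,\ldots,p_k}$ on Borel sets for some constant $c\in(0,\infty)$, which is the asserted proportionality.
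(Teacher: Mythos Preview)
Your proof is correct and follows essentially the same strategy as the paper: verify that $\HH^{\ell+k}$ is a finite, translation-invariant Borel measure on the compact group $X$ and then invoke uniqueness of Haar measure. The paper obtains the finiteness bound $\HH^{\ell+k}(X)\le p_1\cdots p_k$ from the ball inequality $r^{\ell+k}/(p_1\cdots p_k)\le\mu_{\ell,p_1,\ldots,p_k}(B)$, whereas you build an explicit product cover; and you additionally supply the positivity $\HH^{\ell+k}(X)>0$ via the mass distribution principle, which the paper omits but which is needed if one wants the proportionality constant to be nonzero.
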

\begin{proof}
Suppose that $r>0$ and that $B=B((x_1,\ldots ,x_{\ell+k}),r)$ is a ball in $(\R/\Z)^\ell\times\Z_{p_1}\times\cdots\times\Z_{p_k}$. By letting $m_1,\ldots , m_k$ be the unique non-negative integers for which
\[B=\prod_{i=1}^\ell B(x_i,r)\times\prod_{i=\ell+1}^{\ell+k}B(x_i,p_i^{-m_{i-\ell}})\]
we find that
\[\mu_{\ell,p_1,\ldots ,p_k}(B)=\frac{r^\ell}{p_1^{m_1}\cdots p_k^{m_k}}.\]
For each $i\in\{1,\ldots , k\}$ we also have
\[B(x_i,r)\subseteq B(x_i,p_i^{1-m_i})\]
which guarantees that $r\le p_i^{1-m_i}$. Thus
\[\frac{r^{\ell+k}}{p_1\cdots p_k}\le\mu_{\ell,p_1,\ldots ,p_k}(B).\]
By appealing to the definition of Hausdorff measure we have that
\begin{equation}\label{Hmucomparisoneqn}
\HH^{\ell+k}(F)\le p_1\cdots p_k\cdot\mu_{\ell,p_1,\ldots ,p_k}(F)
\end{equation}
for any measurable set $F\subseteq (\R/\Z)^\ell\times\Z_{p_1}\times\cdots\times\Z_{p_k}$,
and in particular that \[\HH^{\ell+k}((\R/\Z)^\ell\times\Z_{p_1}\times\cdots\times\Z_{p_k})\le p_1\cdots p_k<\infty.\]
Thus both $\HH^{\ell+k}$ and $\mu_{\ell,p_1,\ldots , p_k}$ are regular translation invariant measures on the compact group $(\R/\Z)^\ell\times\Z_{p_1}\times\cdots\times\Z_{p_k}$. Therefore by the well known theorem about uniqueness of Haar measures they must be constant multiples of each other.
\end{proof}

\begin{proof}[Proof of Theorem \ref{Q_pMTPthm1}]
As mentioned above the case when $\ell>1$ and $k=0$ was proved in \cite{BeresnevichVelani06}. Therefore we assume that $k>0$ and that $\ell+k>1$.

Now suppose that
\[\sum_{n\in\N}f(\psi(n))\varphi (n)^{\ell+k}=\infty.\]
For any $n\in\N$ we have that
\[\mu_{\ell,p_1,\ldots , p_k}\left(\E_n^{\ell,p_1,\ldots , p_k}(f(\psi))\right)\gg\min\left\{f(\psi(n))\varphi (n)^{\ell+k},1\right\}\]
and so
\[\sum_{n\in\N}\mu_{\ell,p_1,\ldots , p_k}\left(\E_n^{\ell,p_1,\ldots , p_k}(f(\psi))\right)=\infty.\]
Theorem \ref{highdimthm} then tells us that
\[\mu_{\ell, p_1,\ldots,p_k}(W_{\ell, p_1,\ldots,p_k}(\psi))=\mu_{\ell, p_1,\ldots,p_k}((\R/\Z)^\ell\times\Z_{p_1}\times\cdots\times\Z_{p_k})\]
and by Lemma \ref{measequivlem} this implies that
\[\HH^{\ell+k}(W_{\ell, p_1,\ldots,p_k}(\psi))=\HH^{\ell+k}((\R/\Z)^\ell\times\Z_{p_1}\times\cdots\times\Z_{p_k}).\]
Finally we finish the proof with the Mass Transference Principle, which allows us to conclude that
\[\HH^f(W_{\ell,p_1,\ldots ,p_k}(\psi))=\HH^f\left((\R/\Z)^\ell\times\Z_{p_1}\times\cdots\times\Z_{p_k}\right).\]
\end{proof}

\section{Final Remarks}\label{conclusionsec}
In conclusion, in this paper we have formulated a natural extension of the Duffin-Schaeffer Conjecture to the fields $\Q_p$ (Conjecture \ref{Q_pDSC}). We have shown that if the variance method from probability theory can be used to solve the Duffin-Schaeffer Conjecture then the corresponding $p-$adic conjectures are true for all primes $p$ (Theorems \ref{RQ_ptransferthm} and \ref{dimonethm}). On the other hand we have shown that if the variance method can be used to verify Conjecture \ref{Q_pDSC} for even one prime $p$ then almost the entire Duffin-Schaeffer Conjecture would follow (Theorem \ref{Q_pRtransferthm}). Furthermore we have generalized Conjecture \ref{Q_pDSC} to allow simultaneous approximation in any number and combination of real and $p-$adic fields and we have solved this generalized problem in all dimensions greater than one (Theorem \ref{highdimthm}). Finally we have shown that the Mass Transference Principle can be used to deduce all of the corresponding Hausdorff measure versions of the $p-$adic conjectures, provided that the original conjectures are true (Theorems \ref{Q_pMTPthm1} and \ref{Q_pMTPthm2}).

We would like to leave the reader with three interesting unsolved problems which arose in the course of this research. We list them here in what we believe to be the order of increasing difficulty:
\begin{itemize}
\item[(i)] Suppose that the Duffin-Schaeffer Conjecture is true whenever $\psi$ is supported on the set of squarefree integers. Does it then follow that the conjecture is true for all $\psi$?
\item[(ii)] Suppose that (\ref{Q_pDSdivcond1}) is satisfied and for $x\in\Z_p$ and $N\in\N$ let \[M(N,x)=\#\{n\in\N~:~n\le N, x\in\E_n(\psi)\}.\] Under what circumstances can we obtain an asymptotic formula for $M(N,x)$ which holds for almost all $x\in\Z_p$ as $N\rar\infty$?
\item[(iii)] Does there exist a function $\psi$ for which $\lambda (W_\infty(\psi))=1$ but for which the quasi-independence on average method can not be used to prove this?
\end{itemize}
It seems that an answer to the last question would bring us closer to the heart of the Duffin-Schaeffer Conjecture.

\end{document}